\newtheorem{prop}{Proposition}
\newtheorem{thm}{Theorem}
\newtheorem{coll}{Colloary}
\newtheorem{lem}{Lemma}
\newtheorem{defn}{Definition}
\newcommand{\sE}{{\mathcal E}}
\newcommand{\sF}{{\mathcal F}}
\newcommand{\sK}{{\mathcal K}}
\newcommand{\sL}{{\mathcal L}}
\newcommand{\sO}{{\mathcal O}}
\newcommand{\sQ}{{\mathcal Q}}
\title
{Remarks on Xiao's approach of Slope inequalities}
\thanks{Hao Sun is supported by the National Natural Science Foundation of China (No. 11301201); Xiaotao Sun is supported by the National Natural Science Foundation of China (No.11321101);  Mingshuo Zhou is supported by the
National Natural Science Foundation of China (No. 11501154) and Natural Science Foundation of Zhejiang Provincial (No. LQ16A010005).}
\author{Hao Sun, Xiaotao Sun and Mingshuo Zhou}
\begin{document}


\begin{abstract}
 We prove the slope inequality for a relative minimal surface fibration in positive characteristic via Xiao's approach. We also prove a better low bound for the slope of non-hyperelliptic fibrations.

\end{abstract}

\maketitle

\section{Introduction}
Let $S$ be a smooth projective surface over an algebraically closed field $\mathbf{k}$ of characteristic $p\ge 0$ and $f:S\rightarrow B$ be a fibration with smooth general  fiber $F$ of genus $g$ over a smooth projective curve $B$. Let $\omega_{S/B}:=\omega_S\otimes f^{\ast}\omega_B^{\vee}$ be the relative canonical sheaf of $f$, and $K_{S/B}:=K_S-f^{\ast}K_B$ be the relative canonical divisor. We say that $f$ is relatively minimal if $S$ contains no $(-1)$-curve in fibers. The following basic relative invariants are well known:
$$\aligned & K_{S/B}^2= (K_S-f^{\ast}K_B)^2= K_S^2-8(g-1)(b-1),\\
& \chi_f= \mathrm{deg}f_{\ast}\omega_{S/B}=\chi(\sO_S)-(g-1)(b-1).\endaligned$$
\noindent When $f$ is relatively minimal and $F$ is smooth, then $K_{S/B}$ is a nef divisor (see \cite{X4}).
Under this assumption, the relative invariants satisfy the following remarkable so-called slope inequality.
\begin{thm}\label{thm1} If $f$ is relatively minimal, and the general fiber $F$ is smooth, then
\begin{align*} K_{S/B}^2\geq \frac{4(g-1)}{g}\chi_f.
\tag{1.1} \end{align*}
\end{thm}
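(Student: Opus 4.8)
The plan is to run Xiao's filtration argument, which is purely intersection–theoretic and so has a chance of carrying over verbatim to characteristic $p$. We may assume $g\ge 2$, the cases $g\le 1$ being trivial (then $\tfrac{4(g-1)}{g}\le 0$ and $K_{S/B}^2\ge 0$ since $K_{S/B}$ is nef). Set $L:=K_{S/B}$, nef by the result quoted above, and put $E:=f_{\ast}\omega_{S/B}$, locally free of rank $g$ on $B$. Let $0=E_{0}\subsetneq E_{1}\subsetneq\cdots\subsetneq E_{n}=E$ be the Harder--Narasimhan filtration, with slopes $\mu_{1}>\cdots>\mu_{n}$, ranks $r_{i}=\operatorname{rk}E_{i}$, and set $\mu_{n+1}:=0$; from the nefness of $f_{\ast}\omega_{S/B}$ (known also in positive characteristic for fibrations with smooth general fibre) one has $\mu_{n}\ge 0$, and summation by parts gives $\chi_{f}=\deg E=\sum_{i=1}^{n}(r_i-r_{i-1})\mu_i=\sum_{i=1}^{n}r_{i}(\mu_{i}-\mu_{i+1})$. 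For each $i$ let $N_{i}$ be the moving part of the rank one subsheaf $\mathrm{Im}(f^{\ast}E_{i}\to\omega_{S/B})$, i.e.\ $N_i\sim K_{S/B}-\Delta_i$ with $\Delta_i\ge 0$ the fixed divisorial part, and put $d_{i}:=N_{i}\cdot F$, so $0=d_{0}\le d_{1}\le\cdots\le d_{n}$. Since $|K_{F}|$ is base point free for $g\ge 2$ we get $d_{n}=2g-2$; and on a general fibre $N_{i}|_{F}\sim K_F-\Delta_i|_F\le K_{F}$ is effective with $h^{0}(F,N_{i}|_{F})\ge r_{i}$, hence special ($h^1(N_i|_F)=h^0(K_F-N_i|_F)\ge 1$), so Clifford's theorem — valid over a field of any characteristic — yields $d_{i}\ge 2(r_{i}-1)$.

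The heart of the matter is Xiao's numerical inequality: a lower bound of the shape
\begin{align*}
K_{S/B}^{2}\ \ge\ \sum_{i=1}^{n}c_{i}\,(\mu_{i}-\mu_{i+1}),
\end{align*}
where each $c_{i}$ is an explicit linear expression in $d_{i-1},d_{i},d_{i+1}$ (and in $g$, through $d_n=2g-2$). I would prove it exactly as Xiao does: write $L$ as the sum of the effective divisors $N_{i}-N_{i-1}$ together with the remaining effective divisor $L-N_{n}$, expand $L^{2}$, and bound the resulting intersection numbers $L\cdot N_{i}$ and $N_{i}\cdot N_{j}$ from below using only the nefness of $L$, the inclusions $N_{i}\le N_{i+1}\le L$, the fibre degrees $d_{i}=N_{i}\cdot F$, and the Hodge index theorem. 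No vanishing theorem and no Bogomolov-type inequality enters — that is the whole point. Given this, substitute $\chi_{f}=\sum_{i}r_{i}(\mu_{i}-\mu_{i+1})$ and the Clifford bounds $d_{i}\ge 2(r_{i}-1)$, and optimise over the cone $\{\mu_{i}-\mu_{i+1}\ge 0\}$; the inequality $K_{S/B}^{2}\ge\tfrac{4(g-1)}{g}\chi_{f}$ then drops out, with extremal profile $d_{i}=2(r_{i}-1)$, $r_{n}=g$, which is exactly the hyperelliptic case where the inequality is sharp.

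The main obstacle, as I see it, is this final optimisation. It is \emph{not} a term-by-term comparison: the naive estimate $c_i\ge\tfrac{4(g-1)}{g}r_i$ fails already for the low-rank steps of the filtration (where $r_{i}$ is small while $d_{i}$ is only forced to be about $2r_{i}$), so one cannot conclude simply by comparing coefficients on each extreme ray of the cone. Instead one must exploit the global structure of the filtration — in particular that it terminates at $(r_{n},d_{n})=(g,2g-2)$ — together with the precise coefficients $c_{i}$ coming from Xiao's inequality; getting that bookkeeping exactly right, so the optimisation closes onto the constant $4-\tfrac{4}{g}$ and not something weaker, is the delicate part.

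The secondary point, and the reason the statement deserves to be recorded in characteristic $p$, is to check that every ingredient above is characteristic-free: the nefness of $K_{S/B}$ (cited), the nefness of $f_{\ast}\omega_{S/B}$ yielding $\mu_{n}\ge 0$, Clifford's theorem on the general fibre, and the purely numerical derivation of Xiao's inequality — none of these uses Kodaira-type vanishing. This is precisely what distinguishes Xiao's approach from the Bogomolov/GIT or vanishing-based proofs of the slope inequality, and why it is the natural one to carry into positive characteristic.
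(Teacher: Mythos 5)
There is a genuine gap, and it sits precisely at the step your proposal treats as routine. You claim that Xiao's numerical inequality $K_{S/B}^2\ge\sum_i c_i(\mu_i-\mu_{i+1})$ can be obtained ``purely intersection-theoretically'', using only nefness of $K_{S/B}$, the inclusions $N_i\le N_{i+1}$, the fibre degrees $d_i$, and the Hodge index theorem. It cannot: none of those inputs sees the Harder--Narasimhan slopes $\mu_i$ at all. In Xiao's argument the slopes enter because the $\mathbb{Q}$-divisors $N_i=K_{S/B}-Z_i-\mu_i F$ (moving part \emph{minus} $\mu_i F$) are nef, and that nefness comes from the Miyaoka--Xiao lemma (Lemma \ref{lem1.2} in the paper): semistability of $E_i/E_{i-1}$ implies $\mathcal{O}_{E_i}(1)-\mu_i\Gamma_i$ is nef on $\mathbb{P}(E_i)$. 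This is a genuinely characteristic-zero statement --- its proof needs semistability to survive pullback along arbitrary finite covers of $B$, and in characteristic $p$ a curve in $\mathbb{P}(E_i)$ can map inseparably to $B$, where Frobenius pullback may destabilize $E_i/E_{i-1}$; the lemma is false for merely semistable quotients. The Hodge index theorem plays no role in Xiao's proof and cannot substitute for this positivity. So what you call the ``secondary point'' (checking every ingredient is characteristic-free) is in fact the whole problem, and it fails as stated. A further soft spot: you invoke nefness of $f_*\omega_{S/B}$ in positive characteristic to get $\mu_n\ge 0$; this is not a safe input for non-semistable fibrations, and the paper is arranged (eliminating $\mu_1$ and $\mu_n$ between the two inequalities (2.1) and (2.2)) so that no sign condition on $\mu_n$ is ever needed. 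By contrast, the ``delicate optimisation'' you anticipate is just this two-step elimination, not a cone optimisation.

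The paper's actual route supplies exactly the missing idea. By Langer's theorem (Lemma \ref{lem1.3}) there is $k_0$ such that for $k\ge k_0$ all quotients in the Harder--Narasimhan filtration of $F^{k*}f_*\omega_{S/B}$ are \emph{strongly} semistable, and for strongly semistable quotients the nefness of $\mathcal{O}_{E_i}(1)-\mu_i\Gamma_i$ does hold (Lemma \ref{lem1.4}). One then runs Xiao's machine not on $K_{S/B}$ but on $p^kK_{S/B}=F_S^{k*}K_{S/B}$, with $\mathcal{L}_i\subset\mathcal{O}_S(p^kK_{S/B})$ the image of $f^*E_i$; the one new wrinkle is the Clifford bound, since on the general fibre $\mathcal{L}_i|_F$ is generated by the $p^k$-th powers $s_1^{p^k},\dots,s_{r_i}^{p^k}$ of sections of $\omega_F$, so Clifford is applied to the subsheaf $L_i\subset\omega_F$ generated by $s_1,\dots,s_{r_i}$, giving $d_i=p^k\deg L_i\ge p^k(2r_i-2)$. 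With these replacements the numerical part (Lemma \ref{lem1.1} plus the elimination of $\mu_1,\mu_n$) goes through and yields $K_{S/B}^2\ge\frac{4(g-1)}{g}\chi_f$; without them, i.e.\ as your proposal stands, the key inequality $K_{S/B}^2\ge\sum_i(d_i+d_{i+1})(\mu_i-\mu_{i+1})$ is unproved in characteristic $p$.
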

When $char(\mathbf{k})=0$, this inequality was proved by Xiao (see \cite{XG}). For the case of semi-stable fibration, it was proved independently by Cornalba-Harris (see \cite{C.H}). When $char(\mathbf{k})=p>0$, there exist a few approach to prove this inequality (see \cite{M}, \cite{Y.Z}, ect). Some of them require the condition of
semi-stable fibration.

In this note, we explain why Xiao's approach still works in the case of $char(\mathbf{k})=p>0$. Indeed, Xiao's approach is to study the Harder-Narasimhan filtration
$$0=E_0\subset E_1\subset \cdots \subset E_n=E=f_{\ast}\omega_{S/B}$$
and give lower bound of $K_{S/B}^2$ in term of slop $\mu_i=\mu(E_i/E_{i-1})$. Here one of the key points
is that semi-stability of $E_i/E_{i-1}$ will imply nefness of $\mathbb{Q}$-divisors $\mathcal{O}_{\mathbb{P}(E_i)}(1)-\mu_i\Gamma_i$ where $\Gamma_i$ is a fiber of
$\mathbb{P}(E_i)\to B$. This is the only place one needs $char(\mathbf{k})=0$.

Our observation is that by a result of A. Langer there is an integer $k_0$ such that, when $k\ge k_0$, the Harder-Narasimhan filtration
$$0=E_0\subset E_1\subset \cdots \subset E_n=E=F^{k*}f_{\ast}\omega_{S/B}$$
of $F^{k*}f_{\ast}\omega_{S/B}$ has strongly semi-stable $E_i/E_{i-1}$ ($1\le i\le n$) and that strongly semi-stability of $E_i/E_{i-1}$ implies
nefness of $\mathcal{O}_{\mathbb{P}(E_i)}(1)-\mu_i\Gamma_i$. When $f: S\to B$ is a semi-stable fibration, any Frobenius base change $F^k:B\to B$ induces
fibration $\tilde{f}: \widetilde{S}\to B$ such that
$$F^{k*}f_{\ast}\omega_{S/B}=\tilde{f}_*\omega_{\widetilde{S}/B},\quad \frac{K_{S/B}^2}{\mathrm{deg}f_{\ast}\omega_{S/B}}=\frac{K_{\widetilde{S}/B}^2}{\mathrm{deg}\tilde{f}_{\ast}\omega_{\widetilde{S}/B}}.$$
Thus for semi-stable fibration $f:S\to B$ we can assume (without loss of generality) that all $E_i/E_{i-1}$ appearing in Harder-Narasimhan filtration of
$E=f_*\omega_{S/B}$ are strongly semi-stable. Then Xiao's approach works for $char(\mathbf{k})=p>0$ without any modification. We will show in this note that
a slightly modification of Xiao's approach works for any fibration $f:S\to B$. In fact, we will prove the following more general result holds for
$char(\mathbf{k})=p\ge 0$.

\begin{thm} Let $D$ be a relative nef divisor on $f:S\to B$ such that $D|_{F}$ is generated by global sections on a general smooth fiber $F$ of
$f:S\to B$. Assume that $D|_{F}$ is a special divisor on $F$ and $$A=2h^0(D|_{F})-D\cdot F-1>0.$$ Then
$$D^2\ge \frac{2D\cdot F}{h^0(D|_{F})}{\rm deg}(f_*\sO_S(D)).$$
\end{thm}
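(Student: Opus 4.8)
The plan is to run Xiao's argument from \cite{XG} with the pair $(\sO_S(D),D|_F)$ in the role played there by $(\omega_{S/B},K_F)$, so that $r:=h^0(D|_F)$, $d:=D\cdot F$ and $\chi:=\deg f_*\sO_S(D)$ take over the roles of $g$, $2g-2$ and $\chi_f$. The hypotheses pin $d$ down: $D|_F$ being effective and special, Clifford's inequality gives $2r-2\le d$, while $A>0$ reads $d\le 2r-2$, so $d=2r-2$ and the claimed estimate is exactly $D^2\ge\frac{4(r-1)}{r}\chi$, of the shape of Theorem~\ref{thm1}. Put $E:=f_*\sO_S(D)$, locally free of rank $r$ and degree $\chi$ on $B$ (we may assume $\chi\ge0$), with Harder--Narasimhan filtration $0=E_0\subset E_1\subset\cdots\subset E_n=E$, ranks $r_i:=\operatorname{rank}E_i$ and slopes $\mu_i:=\mu(E_i/E_{i-1})$, so $\mu_1>\cdots>\mu_n$ and $\chi=\sum_i\mu_i(r_i-r_{i-1})$. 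In characteristic $p>0$ I first reduce to the strongly semi-stable case: by Langer's theorem the HN quotients of $F^{k*}E$ are strongly semi-stable once $k\ge k_0$, and passing to the fibration $f'\colon S'\to B$ produced by the Frobenius base change $\pi=F^k_B\colon B\to B$ (that is, $S'=S\times_{B,\pi}B$, normalised and desingularised) changes nothing essential: flat base change gives $f'_*\sO_{S'}(\pi^*D)=F^{k*}E$ of degree $p^k\chi$, one has $(\pi^*D)^2=p^kD^2$, and on a general fibre $F'\cong F$ both $\pi^*D\cdot F'=d$ and $h^0(\pi^*D|_{F'})=r$ are unchanged, so the inequality for $f'$ is verbatim the inequality for $f$. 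Hence we may assume every $E_i/E_{i-1}$ strongly semi-stable (an empty condition when $\operatorname{char}\mathbf k=0$).

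Next I install the moving parts. For each $i$ the morphism $f^*E_i\to\sO_S(D)$ defines a rational map $S\dashrightarrow\mathbb P(E_i)$ over $B$; fix a birational morphism $\sigma\colon\widetilde S\to S$ resolving all of them at once, and write $\sigma^*D=M_i+Z_i$ with $\sO_{\widetilde S}(M_i)$ globally generated by $\sigma^*E_i$ (so $M_i$ is nef) and $Z_i\ge0$ the fixed part. Then $0\le M_1\le\cdots\le M_n\le\sigma^*D$, the $Z_i$ decrease, $Z_n$ is vertical, and $M_n|_F=D|_F$ (as $D|_F$ is globally generated), so $d_n:=M_n\cdot F=d$. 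As recalled in the introduction, strong semi-stability of $E_i/E_{i-1}$ makes $\sO_{\mathbb P(E_i)}(1)-\mu_i\Gamma_i$ nef, and pulling back along $\widetilde S\to\mathbb P(E_i)$ yields
\[
M_i-\mu_iF\ \text{nef on}\ \widetilde S\qquad(1\le i\le n).
\]
The hypotheses on $D|_F$ enter on the general fibre through Clifford's theorem: $M_i|_F\le D|_F$ is effective and again special (since $h^0(K_F-M_i|_F)\ge h^0(K_F-D|_F)=h^1(D|_F)>0$), while the $r_i$-dimensional image of $E_i\otimes\mathbf k(b)$ in $H^0(F,D|_F)$ forces $h^0(M_i|_F)\ge r_i$; therefore
\[
d_i:=M_i\cdot F\ \ge\ 2\bigl(h^0(M_i|_F)-1\bigr)\ \ge\ 2(r_i-1)\qquad(1\le i\le n),\qquad d_n=2(r-1).
\]

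It remains to carry out Xiao's numerical estimate. Since $Z_n$ is vertical and $D$ is relatively nef, $(\sigma^*D)\cdot Z_n\ge0$, so $D^2=(\sigma^*D)^2\ge(\sigma^*D)\cdot M_n\ge M_n^2$. The nef classes $M_i-\mu_iF$ give $M_i^2\ge2\mu_id_i$ (a nef divisor on a surface has non-negative self-intersection) and $M_iM_j\ge\mu_id_j+\mu_jd_i$. Feeding these, together with the chain $0\le M_1\le\cdots\le M_n\le\sigma^*D$, into the same manipulation as in \cite{XG} gives a lower bound $D^2\ge\Phi(d_1,\dots,d_n;\mu_1,\dots,\mu_n)$; substituting the Clifford bounds $d_i\ge2(r_i-1)$, $d_n=2(r-1)$ and the identity $\chi=\sum_i\mu_i(r_i-r_{i-1})$ and running Xiao's optimisation over the admissible data yields $D^2\ge\frac{2d}{r}\chi$.

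I expect the last, numerical, step to be the main obstacle. It is genuinely not a slot-by-slot comparison of $D^2\ge\sum_i(\mu_i-\mu_{i+1})\phi_i$ with $\tfrac{2d}{r}\chi=\sum_i(\mu_i-\mu_{i+1})\tfrac{2dr_i}{r}$: for the low-rank, high-slope pieces (those with $r_i<r/2$) the Clifford bound $d_i\ge2(r_i-1)$ is simply too weak for the inequality to hold term by term, and one must reproduce Xiao's more global estimate, which plays the jumps $r_i-r_{i-1}$ against the jumps $d_i-d_{i-1}$ and handles the top of the filtration by a case distinction on the size of $\mu_1$. The one point not already in \cite{XG} that also needs care is the characteristic-$p$ reduction of the first paragraph: one has to verify that the Frobenius base change, followed by normalisation and desingularisation of $S'$, really does scale $D^2$ and $\chi$ by $p^k$, fix $d$ and $r$, and preserve the hypotheses ``$D$ relatively nef'' and ``$D|_F$ globally generated and special on a general fibre''.
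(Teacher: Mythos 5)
Your proposal stops exactly where the real work of this theorem begins. After the (correct) setup --- nef classes $M_i-\mu_iF$, the Clifford bounds $d_i\ge 2r_i-2$, and $d_n=D\cdot F$ --- you appeal to ``the same manipulation as in Xiao'' and to ``Xiao's optimisation over the admissible data'', and you yourself point out that a term-by-term comparison fails and that a global estimate with case distinctions would be needed; but that estimate is never supplied. This is the heart of the statement, and it is precisely where the paper's proof differs and is complete: applying Lemma \ref{lem1.1} twice --- once to the full chain $Z_1\ge\cdots\ge Z_n$ with the Clifford bounds, and once to the two-term chain $Z_1\ge Z_n\ge 0$ --- one gets the two inequalities $D^2\ge 4\,{\rm deg}(f_*\sO_S(D))-2\mu_1-2A\mu_n$ and $D^2\ge D\cdot F(\mu_1+\mu_n)$, and then eliminating $\mu_1$, eliminating $\mu_n$ (here $A>0$ is used), and adding gives $4h^0(D|_F)D^2-8D\cdot F\,{\rm deg}(f_*\sO_S(D))\ge 2(A-1)D\cdot F(\mu_1-\mu_n)\ge 0$. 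No case analysis on $\mu_1$ is needed. Your observation that Clifford together with $A>0$ forces $D\cdot F=2h^0(D|_F)-2$ (so $A=1$) is correct but does not by itself discharge the optimisation; it only says the target has the shape of the slope inequality.

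The characteristic-$p$ reduction is also a genuine gap, and it is not the paper's route. You base-change the fibration by $F^k_B$, normalise and desingularise, and assert $f'_*\sO_{S'}(\pi^*D)=F^{k*}E$ with degree $p^k\chi$, concluding ``WLOG all Harder--Narasimhan quotients of $f_*\sO_S(D)$ are strongly semistable''. But after normalisation and resolution the new pushforward $E'$ may strictly contain $F^{k*}E$ (the clean equality is exactly what the introduction claims only for semistable fibrations), and Langer's theorem says nothing about the HN quotients of $E'$; to make them strongly semistable you would have to base-change again, and the problem recurs, so the WLOG is not justified as stated --- a point you flag yourself as unchecked. The paper avoids base change of the surface altogether: it pulls back by the absolute Frobenius $F^k_S$ on $S$, works with $\sO_S(p^kD)$ and the maps $f^*E_i\to\sO_S(p^kD)$ coming from the HN filtration of $F^{k*}E$, and handles the one new subtlety by applying Clifford not to $\sL_i|_F\subset\sO_S(p^kD)|_F$ (which is not special) but to the subsystem $L_i\subset\sO_S(D)|_F$ generated by the sections whose $p^k$-th powers generate $\sL_i|_F$, giving $d_i\ge p^k(2r_i-2)$. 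Your base-change route could plausibly be repaired (e.g.\ by running the argument with the subsheaf $F^{k*}E\subset E'$, where only inequalities of degrees are needed), but as written both the reduction and the final numerical step are missing.
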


Xiao also constructed examples (cf.\cite[Example~2]{XG}) of hyperelliptic fiberation $f:S\to B$ such that
$$K^2_{S/B}= \frac{4g-4}{g}{\rm deg}(f_*\omega_{S/B})$$
and conjectured (cf. \cite[Conjecture~1]{XG}) that the inequality must be strict for non-hyperelliptic fibrations, i.e., the general
fiber $F$ of $f$ is a non-hyperelliptic curve, which was proved by Konno \cite[Proposition~2.6]{K}. Lu and Zuo \cite{L.Z} obtained a sharp
slope inequality for non-hyperelliptic fibrations, which was generalized to
$char(\mathbf{k})=p>0$ in \cite{L.S} for a
non-hyperelliptic semi-stable fibration.

Here we also remark that our previous observation can be used to prove the following theorem in any
characteristic easily.

\begin{thm}\label{thm3}
Assume that $f:S\rightarrow B$ is a relatively minimal
non-hyperelliptic surface fibration over an algebraically closed
field of any characteristic, and the general fiber of $f$ is smooth.
Then
\begin{align*} K_{S/B}^2\geq \mathrm{min} \{\frac{9(g-1)}{2(g+1)},4\}{\rm deg}f_*\omega_{S/B}. \tag{1.3}\end{align*}
\end{thm}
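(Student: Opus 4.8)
The plan is to run Xiao's slope machine (as encapsulated in Theorem 2 of the excerpt) on a suitable relative divisor, rather than directly on $K_{S/B}$, and then combine the resulting inequalities. First I would recall that for a non-hyperelliptic fibration the general fiber $F$ has genus $g\ge 3$ and the canonical map $\phi_{K_F}$ is an embedding, so $K_F$ is very ample and non-special issues do not arise for $K_{S/B}$ itself ($h^0(K_{S/B}|_F)=g$, $K_{S/B}\cdot F=2g-2$, giving $A=2g-2<0$, so Theorem 2 does not apply to $D=K_{S/B}$ directly). The trick, which is the heart of Xiao's refinement for the non-hyperelliptic case, is to split the Harder--Narasimhan filtration $0=E_0\subset E_1\subset\cdots\subset E_n=E=f_*\omega_{S/B}$ at an index where the fiberwise linear system becomes non-special and to treat the two parts asymmetrically. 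Using the earlier observation of the paper — after a Frobenius base change on $B$ all graded pieces $E_i/E_{i-1}$ may be assumed strongly semistable, and strong semistability of $E_i/E_{i-1}$ yields nefness of $\mathcal{O}_{\mathbb{P}(E_i)}(1)-\mu_i\Gamma_i$ — this whole argument becomes characteristic-free.

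Concretely, I would let $Z_i\subset S$ be the moving part of the linear subsystem cut out by $E_i$, write $M_i=Z_i|_F$ for its fiberwise degree and $r_i=\mathrm{rk}\,E_i$, so that on a general fiber the inclusion $E_i\otimes\mathbf{k}(b)\hookrightarrow H^0(F,K_F)$ gives $h^0(F,Z_i|_F)\ge r_i$ and Clifford-type bounds $M_i\ge 2r_i-2$ on the special range and $M_i\ge r_i+g-1$ (Riemann--Roch, since then $Z_i|_F$ is non-special) on the complementary range. Xiao's inequality then reads
\begin{align*}
K_{S/B}^2 \ \ge\ \sum_{i=1}^{n}(M_i+M_{i-1})(\mu_i-\mu_{i+1}),
\end{align*}
with $\mu_{n+1}:=0$, and the nefness statement above is exactly what legitimizes each term. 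On the other hand $\deg f_*\omega_{S/B}=\sum_i (\mu_i-\mu_{i+1})\,r_i$ up to the standard conventions. The strategy is then to bound the two sums against each other: on the initial (special) range of indices one uses $M_i\ge 2r_i-2$ together with the non-hyperelliptic refinement (the Clifford bound is strict unless the system is hyperelliptic or trivial, which forces a gain of the shape $M_i\ge \frac{3}{2}r_i$-ish contributions, yielding the factor $\frac{9}{2}$), while for the tail one uses the non-special estimate and the genus-$g$ relations; optimizing the resulting linear combination over the slope differences $\mu_i-\mu_{i+1}$ produces the constant $\min\{\tfrac{9(g-1)}{2(g+1)},4\}$, the $4$ coming from the regime where the special part is negligible and only the classical slope bound survives.

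The main obstacle, as in Xiao's and Konno's original arguments, is bookkeeping at the break index: one must choose the index $j$ where $E_j\otimes\mathbf k(b)$ ceases to impose special conditions on $F$ carefully, control the ``boundary'' term $M_j+M_{j-1}$ there, and handle the exceptional cases in the Clifford bound — in particular when some $Z_i|_F$ moves in a hyperelliptic pencil on $F$ even though $F$ itself is non-hyperelliptic, which cannot happen for $g\ge 3$ but needs to be excluded cleanly. I expect the delicate point to be verifying that the quantity $A=2h^0(Z_i|_F)-Z_i\cdot F-1$ is positive exactly on the range of $i$ where we want to invoke Theorem 2, so that the structural input is applied only where it is valid; once that is pinned down, the remaining computation is a finite linear-programming exercise in the variables $\mu_i-\mu_{i+1}$ that yields (1.3). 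The positive-characteristic case requires no extra work beyond the Frobenius-base-change reduction already explained in the introduction.
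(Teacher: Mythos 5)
Your reduction to positive characteristic via Frobenius pull\mbox{-}back and strong semistability is the same as the paper's, and your formula for Xiao's basic inequality is fine; but the core of your argument has a genuine gap, and also a small miscalculation at the start. First the small point: for $D=K_{S/B}$ one has $A=2h^0(K_F)-(2g-2)-1=1>0$, so Theorem 2 \emph{does} apply to $K_{S/B}$ (that is exactly how the paper recovers Xiao's inequality $K^2_{S/B}\ge\frac{4g-4}{g}\deg f_*\omega_{S/B}$); your claim that $A=2g-2<0$ is wrong.

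The real gap is the step where you hope that non-hyperellipticity of $F$ upgrades Clifford's bound for the subsystems $L_i\subset H^0(\omega_F)$ to ``$M_i\ge\frac32 r_i$-ish'' gains and that a linear-programming optimization then yields $\min\{\frac{9(g-1)}{2(g+1)},4\}$. No such refinement of Clifford holds for the subsystems arising in Xiao's method: strict Clifford on a non-hyperelliptic curve only improves $2r_i-2$ to $2r_i-1$, which changes nothing asymptotically (note $\frac{9(g-1)}{2(g+1)}\to\frac92$, strictly above what any Clifford-type input can give), and when the map $\phi_{L_i}$ defined by $E_i$ factors through a degree-$2$ cover of a curve of genus $g_i$ one only gets $d_i\ge 2(r_i-1+g_i)$, essentially Clifford again for small $g_i$, even though $F$ itself is non-hyperelliptic. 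This is precisely why the paper does not argue on $f_*\omega_{S/B}$ alone: it uses Max Noether's theorem (generic surjectivity of $S^2f_*\omega_{S/B}\to f_*(\omega_{S/B}^{\otimes2})$), filters the image by $\sF_i=\varrho(E_i\otimes E_i)$, bounds $\mu_{min}(\sF_i)\ge2\mu_i$ via the tensor-product lemma for strongly semistable bundles, bounds ${\rm rk}(\sF_i)$ by the Lu--Zuo lemma in terms of $r_i$ and $g_i$, and invokes Castelnuovo's bound, with a case division on the degrees $c_i$ of the maps $\phi_{L_i}$. Moreover, the constant $4$ in the minimum does not come from ``the regime where only the classical slope bound survives'' (that bound is $\frac{4(g-1)}{g}<4$); it comes from the Cornalba--Stoppino inequality $K^2_{S/B}\ge\frac{4(g-1)}{g-g_i}\deg f_*\omega_{S/B}$ applied in the double-cover case $c_i=2$, $g_i<\frac{g-1}{4}$, together with $g_i\ge1$. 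Your proposal never confronts this double-cover case, and without the multiplication-map machinery (or an equivalent substitute) the bookkeeping you describe cannot reach the stated constant.
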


Our article is organized as follows. In Section 2, we give a
generalization of Xiao's approach, and show that a slightly
modification of Xiao's approach works in any characteristic. In
Section 3, we prove Theorem \ref{thm3} via the modification of
Xiao's approach and the modified second multiplication map
$F^{k*}S^2f_*\omega_{S/B} \to F^{k*}f_*(\omega_{S/B}^{\otimes 2})$.

\section{Xiao's approach and its generalization}
We start from an elementary (but important) lemma due to Xiao.

\begin{lem} (\cite[Lemma~2]{XG})\label{lem1.1} Let $f: S\rightarrow B$ be a relatively minimal fibration, with a general fiber $F$. Let $D$ be a divisor on $S$, and suppose that there are a sequence of effective divisors $$Z_1\geq Z_2\geq \cdots \geq Z_n\geq Z_{n+1}=0$$
and a sequence of rational numbers $$\mu_1>\mu_2\cdots >\mu_n,\quad \mu_{n+1}=0$$
such that for every $i$, $N_i=D-Z_i-\mu_i F$ is a nef $\mathbb{Q}$-divisor. Then $$D^2\geq \sum_{i=1}^n(d_i+d_{i+1})(\mu_i-\mu_{i+1}),$$
where $d_i=N_i\cdot F$.
\end{lem}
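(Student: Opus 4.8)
The plan is to establish the stronger, per-step estimate
$$N_{i+1}^2 - N_i^2 \ \ge\ (d_i + d_{i+1})(\mu_i - \mu_{i+1}) \qquad (1 \le i \le n),$$
where I extend the notation by $N_{n+1} := D$; note that since $Z_{n+1} = 0$ and $\mu_{n+1} = 0$ this is consistent with $N_{n+1} = D - Z_{n+1} - \mu_{n+1}F$, so $N_{n+1}$ is nef and $d_{n+1} = N_{n+1}\cdot F = D\cdot F$. Granting these $n$ inequalities, I would simply sum them: the left-hand side telescopes to $N_{n+1}^2 - N_1^2 = D^2 - N_1^2$, and since $N_1$ is nef we have $N_1^2 \ge 0$, so $D^2 \ge \sum_{i=1}^n (d_i + d_{i+1})(\mu_i - \mu_{i+1})$, which is the assertion.

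To prove the per-step estimate, set $W_i := Z_i - Z_{i+1}$ --- an \emph{effective} divisor, because the $Z_j$ form a decreasing sequence --- and $\nu_i := \mu_i - \mu_{i+1} > 0$, so that $N_{i+1} = N_i + W_i + \nu_i F$. Intersecting this identity with the fibre $F$ and using $F^2 = 0$ gives the relation $W_i \cdot F = d_{i+1} - d_i$. I would then expand
$$N_{i+1}^2 = N_i^2 + W_i^2 + 2\,N_i\cdot W_i + 2\nu_i\,N_i\cdot F + 2\nu_i\,W_i\cdot F$$
(again using $F^2 = 0$) and substitute $N_i\cdot F = d_i$ together with $W_i\cdot F = d_{i+1} - d_i$, obtaining $N_{i+1}^2 - N_i^2 = W_i^2 + 2\,N_i\cdot W_i + 2\nu_i\,d_{i+1}$. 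After subtracting the target value $(d_i + d_{i+1})\nu_i$ and using $d_{i+1} - d_i = W_i\cdot F$ once more, the required inequality collapses to
$$W_i\cdot\bigl(2N_i + W_i + \nu_i F\bigr) \ =\ W_i\cdot(N_i + N_{i+1}) \ \ge\ 0,$$
which holds because $W_i$ is effective and $N_i + N_{i+1}$ is a sum of nef $\mathbb{Q}$-divisors (here the case $i = n$ is where nefness of $N_{n+1} = D$ enters), hence nef.

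The one genuine subtlety --- and the step that makes the statement nontrivial --- is that the self-intersection $W_i^2 = (Z_i - Z_{i+1})^2$ is not a priori non-negative, so a naive expansion of $D^2$ does not obviously yield anything. The whole point is the rearrangement above, which absorbs the possibly negative $W_i^2$ together with $2\,N_i\cdot W_i$ and the contribution of $d_{i+1}-d_i$ into the single intersection number of the effective class $W_i$ with the nef class $N_i + N_{i+1}$; everything else is bookkeeping with $F^2 = 0$, the definition $d_i = N_i\cdot F$, and the telescoping relations $N_{i+1} - N_i = W_i + \nu_i F$ and $N_{n+1} = D$.
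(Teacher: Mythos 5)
Your proof is correct and is essentially the paper's own argument: the per-step identity you reach by expanding $(N_i+W_i+\nu_iF)^2$, namely $N_{i+1}^2-N_i^2=(d_i+d_{i+1})(\mu_i-\mu_{i+1})+(N_i+N_{i+1})\cdot(Z_i-Z_{i+1})$, is exactly the one the paper obtains by substituting $N_{i+1}=N_i+(\mu_i-\mu_{i+1})F+(Z_i-Z_{i+1})$, and both proofs then conclude from effectiveness of $Z_i-Z_{i+1}$, nefness of $N_i+N_{i+1}$ (with $N_{n+1}=D$ read as part of the hypothesis), telescoping, and $N_1^2\ge 0$. The only difference is bookkeeping, so there is nothing to add.
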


\begin{proof} Since $N_{i+1}=N_i+(\mu_i-\mu_{i+1})F+(Z_i-Z_{i+1})$,  we have
$$\aligned N^2_{i+1}&=N_{i+1}N_i+d_{i+1}(\mu_i-\mu_{i+1})+N_{i+1}(Z_i-Z_{i+1})\\
&=N_i^2+(d_i+d_{i+1})(\mu_i-\mu_{i+1})+(N_i+N_{i+1})(Z_i-Z_{i+1})\\&\ge
N_i^2+(d_i+d_{i+1})(\mu_i-\mu_{i+1}).\endaligned$$  Thus $N^2_{i+1}-N^2_i\ge (d_i+d_{i+1})(\mu_i-\mu_{i+1})$ and
$$D^2=N^2_{n+1}=N^2_1+\sum^n_{i=1}(N^2_{i+1}-N^2_i)\ge \sum_{i=1}^n(d_i+d_{i+1})(\mu_i-\mu_{i+1}).$$
\end{proof}

We need some well-known facts about vector bundles on curves. Let $B$ be a smooth projective curve over $\mathbf{k}$, for a vector bundle $E$ on $B$, the slope of $E$ is defined to be $$\mu(E)=\frac{\mathrm{deg}E}{\mathrm{rk}(E)}$$ where
$\mathrm{rk}(E)$, $\mathrm{deg}E$ denote the rank and degree of $E$ (respectively). Recall that $E$ is said to be semi-stable (resp., stable) if for any nontrivial subbundle $E'\subsetneq E$, we have $$\mu(E')\leq \mu(E) \ \ \ \  (\text{resp.,} <).$$

If $E$ is not semi-stable, one has the following well-known theorem

\begin{thm} (Harder-Narasimhan filtration) For any vector bundle $E$ on $B$, there is a unique filtration $$0:=E_0\subset E_1\subset \cdots \subset E_n=E$$
which is the so called Harder-Narasimhan filtration, such that

(1) each quotient $E_i/E_{i-1}$ is semi-stable for $1\leq i\leq n$,

(2) $\mu_1>\cdots >\mu_n$, where $\mu_i:=\mu(E_i/E_{i-1})$ for $1\leq i\leq n$.
\end{thm}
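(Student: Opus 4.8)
The plan is to establish the theorem in the classical way: construct the filtration from the top (largest-slope) term downward, by induction on $\mathrm{rk}(E)$. The single ingredient that is not pure bookkeeping is a \emph{boundedness} statement, namely that the slopes $\mu(E')$ of nonzero subsheaves $E'\subseteq E$ are bounded above, so that $\mu_{\max}(E):=\sup\{\mu(E')\mid 0\neq E'\subseteq E\}$ is a well-defined rational number; since degrees are integers and ranks lie in $\{1,\dots,\mathrm{rk}(E)\}$, the supremum is attained. To see boundedness I would replace a subsheaf $E'$ by its saturation in $E$ (the kernel of $E\to(E/E')/(\mathrm{torsion})$), a subbundle of the same rank with degree $\geq\deg E'$, and then note that $\wedge^{r'}$ of a rank-$r'$ subbundle is a line subbundle of $\wedge^{r'}E$ of the same degree; this reduces the claim to the classical fact that line subbundles of a fixed vector bundle on a smooth projective curve have bounded degree. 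Granting this, I choose among the subsheaves of $E$ of maximal slope $\mu_{\max}(E)$ one of maximal rank, and replacing it by its saturation (which has the same slope and rank) I may take it to be a subbundle $E_1\subseteq E$.

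First I would check that $E_1$ is the unique subbundle that has maximal slope and, among those, maximal rank, and that it is semistable. For uniqueness, if $E'$ is another such subbundle then $E'\neq E_1$ forces $\mathrm{rk}(E_1\cap E')<\mathrm{rk}(E_1)$; from the exact sequence $0\to E_1\cap E'\to E_1\oplus E'\to E_1+E'\to0$ one gets $\deg(E_1+E')+\deg(E_1\cap E')=\deg E_1+\deg E'$ together with the analogous identity of ranks, and since $\deg(E_1\cap E')\leq\mu_{\max}(E)\,\mathrm{rk}(E_1\cap E')$ this yields $\mu(E_1+E')\geq\mu_{\max}(E)$ while $\mathrm{rk}(E_1+E')>\mathrm{rk}(E_1)$, contradicting maximality of the rank. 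Semistability of $E_1$ is immediate: a nonzero subsheaf of $E_1$ is a subsheaf of $E$, so its slope is $\leq\mu_{\max}(E)=\mu(E_1)$.

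Next I would show $\mu_{\max}(E/E_1)<\mu(E_1)$: if $\bar F\subseteq E/E_1$ is a nonzero subsheaf with $\mu(\bar F)\geq\mu(E_1)$, its preimage $F\subseteq E$ sits in $0\to E_1\to F\to\bar F\to0$, so $\mu(F)\geq\mu(E_1)=\mu_{\max}(E)$ while $\mathrm{rk}(F)>\mathrm{rk}(E_1)$, which is impossible. Now I induct on $\mathrm{rk}(E)$: if $E$ is already semistable take $n=1$, otherwise $E_1\neq E$ and $\mathrm{rk}(E/E_1)<\mathrm{rk}(E)$, so $E/E_1$ has a Harder--Narasimhan filtration $0\subset\bar E_2/E_1\subset\cdots\subset\bar E_n/E_1=E/E_1$ with semistable quotients of strictly decreasing slopes, the largest of which is $\mu_{\max}(E/E_1)$. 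Letting $E_i\subseteq E$ be the preimage of $\bar E_i/E_1$, each $E_i$ is a subbundle, $E_i/E_{i-1}\cong(\bar E_i/E_1)/(\bar E_{i-1}/E_1)$ is semistable, $\mu_1>\mu_2$ by the slope drop just proved, and $\mu_2>\cdots>\mu_n$ by the filtration of $E/E_1$. This proves existence.

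For uniqueness of the whole filtration it suffices to show that the first term of \emph{any} filtration satisfying (1) and (2) coincides with the subbundle $E_1$ characterized in the second paragraph, since one may then apply the statement to $E/E_1$ and induct. So let $0=E_0\subset\cdots\subset E_n=E$ satisfy (1), (2) and let $F\subseteq E$ be any nonzero subsheaf; filtering $F$ by $F_i:=F\cap E_i$, each $F_i/F_{i-1}$ injects into the semistable bundle $E_i/E_{i-1}$ of slope $\mu_i\leq\mu_1$, whence $\deg F=\sum_i\deg(F_i/F_{i-1})\leq\mu_1\,\mathrm{rk}(F)$, i.e.\ $\mu(F)\leq\mu_1$; thus $\mu_{\max}(E)=\mu_1=\mu(E_1)$. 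If moreover $\mu(F)=\mu_{\max}(E)$, every inequality $\deg(F_i/F_{i-1})\leq\mu_1\,\mathrm{rk}(F_i/F_{i-1})$ must be an equality, which for $i\geq2$ forces $F_i/F_{i-1}=0$ (otherwise $\mu(F_i/F_{i-1})=\mu_1>\mu_i$ would contradict semistability of $E_i/E_{i-1}$), so $F\subseteq E_1$; hence the first term $E_1$ of the given filtration is the unique subsheaf of maximal rank among those of slope $\mu_{\max}(E)$, and therefore agrees with the intrinsic $E_1$ of the second paragraph. The one genuinely substantial point in all of this is the boundedness input behind the existence of $\mu_{\max}(E)$; everything else is formal manipulation of ranks and degrees, and in particular no part of the argument is sensitive to $\mathrm{char}(\mathbf k)$.
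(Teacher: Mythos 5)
Your proof is correct, and it is the standard argument: boundedness of slopes of subsheaves gives a well-defined $\mu_{\max}(E)$, the maximal destabilizing subbundle $E_1$ is produced as a saturated subsheaf of maximal slope and, among those, maximal rank, and existence and uniqueness of the full filtration then follow by induction on the rank via the slope estimate $\mu_{\max}(E/E_1)<\mu(E_1)$ and the fact that every subsheaf of slope $\mu_{\max}(E)$ lies inside the first term of any filtration satisfying (1) and (2). There is nothing in the paper to compare it with: the paper states this theorem as a well-known result (Harder--Narasimhan) and gives no proof, using it only as input for Xiao's method, so your writeup simply supplies the classical argument. Two small points you pass over quickly but which are easily filled in: the boundedness of degrees of line subbundles of a fixed bundle on a curve (which you correctly flag as the one substantive input), and the final identification of the first term of a given filtration with the intrinsic $E_1$, which uses that a containment of saturated subsheaves of equal rank and equal slope is an equality. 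Your closing remark is also the relevant one for this paper: nothing in the argument depends on $\mathrm{char}(\mathbf{k})$; what fails in positive characteristic is not the existence of the filtration but the semistability of Frobenius pull-backs, which is exactly why the paper invokes Langer's theorem and strong semistability later on.
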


The rational numbers $\mu_{max}(E):=\mu_1$ and $\mu_{min}(E):=\mu_n$ are important invariants of $E$. Let $\pi:\mathbb{P}(E)\to B$ be projective bundle and $\pi^*E\to \mathcal{O}_E(1)\to 0$ be the tautological quotient line bundle. Then the following lemma (which was proved by Xiao in another formulation) relating semi-stability of $E$ with nefness of $\mathcal{O}_E(1)$ only holds when $char(\mathbf{k})=0$.

\begin{lem} (\cite[Theorem~3.1]{MY}, See also \cite[Lemma~3]{XG})\label{lem1.2} Let $\Gamma$ be a fiber of $\pi:\mathbb{P}(E)\to B$. Then
$$\mathcal{O}_E(1)-\mu_{min}(E)\Gamma$$
is a nef $\,\,\mathbb{Q}$-divisor. In particular, for each sub-bundle $E_i$ in Harder-Narasimhan filtration of $E$, the divisor
$$\mathcal{O}_{E_i}(1)-\mu_i\Gamma_i$$
is a nef $\,\,\mathbb{Q}$-divisor, where $\Gamma_i$ is a fiber of $\mathbb{P}(E_i)\to B$.
\end{lem}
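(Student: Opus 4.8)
The plan is to verify nefness of $\mathcal{O}_E(1)-\mu_{min}(E)\Gamma$ by intersecting it with an arbitrary irreducible curve $C\subset\mathbb{P}(E)$ and checking the result is $\geq 0$, distinguishing two cases according to the image of $C$ in $B$. If $\pi(C)$ is a point, then $C$ lies in a fibre $\mathbb{P}^{r-1}$ on which $\mathcal{O}_E(1)$ restricts to the ample bundle $\mathcal{O}_{\mathbb{P}^{r-1}}(1)$, so $\mathcal{O}_E(1)\cdot C>0$; since the class of $\Gamma$ may be represented by a fibre disjoint from $C$, we have $\Gamma\cdot C=0$, and we are done in this case. So the only real case is when $C$ dominates $B$.

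Assume $C$ dominates $B$. I would pass to the normalization $g\colon\widetilde C\to C\hookrightarrow\mathbb{P}(E)$ and set $\nu=\pi\circ g\colon\widetilde C\to B$, a finite morphism of smooth curves of some degree $d\geq 1$. The induced map $\widetilde C\to\widetilde C\times_B\mathbb{P}(E)=\mathbb{P}(\nu^*E)$ is a section of the projective bundle, hence corresponds to a line-bundle quotient $\nu^*E\twoheadrightarrow L$, and $g^*\mathcal{O}_E(1)=L$. Since $g$ is birational onto $C$, one gets $\mathcal{O}_E(1)\cdot C=\mathrm{deg}\,L$ and $\Gamma\cdot C=d$, so
\[
(\mathcal{O}_E(1)-\mu_{min}(E)\,\Gamma)\cdot C=\mathrm{deg}\,L-d\cdot\mu_{min}(E),
\]
and everything is reduced to proving $\mathrm{deg}\,L\geq d\cdot\mu_{min}(E)$.

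For this I would combine two facts. The first is characteristic-free: for any nonzero quotient bundle $V\twoheadrightarrow Q$ one has $\mu(Q)\geq\mu_{min}(V)$ (the minimal Harder--Narasimhan quotient of $Q$ is again a quotient of $V$, and a standard Harder--Narasimhan chase gives the bound); applied to $\nu^*E\twoheadrightarrow L$ this yields $\mathrm{deg}\,L=\mu(L)\geq\mu_{min}(\nu^*E)$. The second fact is where characteristic $0$ is indispensable: $\mu_{min}(\nu^*E)=d\cdot\mu_{min}(E)$, equivalently, the pullback of a semistable bundle along a finite morphism of smooth curves remains semistable. I would prove this by replacing $\widetilde C$ by the Galois closure $C'\to B$ of $\nu$, with group $G$: a hypothetical maximal destabilizing subsheaf of the pullback of $E$ to $C'$ is unique, hence $G$-stable, hence (taking $G$-invariants of its pushforward, using that $|G|$ is invertible) descends to a subsheaf of $E$ of strictly larger slope, contradicting semistability. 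Putting the two facts together gives $\mathrm{deg}\,L\geq d\cdot\mu_{min}(E)$, and nefness of $\mathcal{O}_E(1)-\mu_{min}(E)\Gamma$ follows.

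The main obstacle is exactly this last step: it is precisely here that the hypothesis $\mathrm{char}(\mathbf{k})=0$ cannot be dropped, since in characteristic $p$ a Frobenius pullback may destabilize a semistable bundle — which is what the rest of the paper works around by passing to strongly semistable bundles. Finally, for the ``in particular'' assertion: for a term $E_i$ of the Harder--Narasimhan filtration of $E$, the flag $0\subset E_1\subset\cdots\subset E_i$ is its own Harder--Narasimhan filtration, so $\mu_{min}(E_i)=\mu_i$, and applying the first part to $E_i$ shows that $\mathcal{O}_{E_i}(1)-\mu_i\Gamma_i$ is nef.
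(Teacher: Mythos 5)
Your proof is correct, and it is essentially the standard argument behind the result the paper merely cites (Miyaoka's Theorem~3.1, Xiao's Lemma~3): test nefness on curves, reduce a multisection to a line-bundle quotient of $\nu^*E$ on the normalization, and use that in characteristic zero pullback along a finite morphism preserves semistability (your Galois-closure descent), so $\deg L\ge \mu_{min}(\nu^*E)=d\,\mu_{min}(E)$. This last step is precisely the characteristic-zero input the paper points to, and which it later replaces by strong semistability in Lemma~\ref{lem1.4}; your identification of $\mu_{min}(E_i)=\mu_i$ for the ``in particular'' clause is also correct.
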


\begin{thm}\label{thm1.3} Let $D$ be a relative nef divisor on $f:S\to B$ such that $D|_F$ is generated by global sections on a general smooth fiber $F$ of
$f:S\to B$. Assume that $D|_F$ is a special divisor on $F$ and $$A=2h^0(D|_F)-D\cdot F-1>0.$$ Then
$$D^2\ge \frac{2D\cdot F}{h^0(D|_F)}{\rm deg}(f_*\sO_S(D)).$$
\end{thm}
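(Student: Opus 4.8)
The plan is to run Xiao's argument, with the single structural change that the nefness statement Lemma~\ref{lem1.2} is invoked only for \emph{strongly} semi-stable bundles; this is arranged by a preliminary Frobenius reduction, after which the combinatorial part goes through verbatim. In characteristic $0$ no reduction is needed, since there semi-stable $=$ strongly semi-stable. In characteristic $p>0$, Langer's theorem furnishes $k$ for which the Harder--Narasimhan filtration of $F^{k*}E$, $E:=f_*\sO_S(D)$, has strongly semi-stable quotients. Base-changing $f$ along the corresponding Frobenius $\sigma:B'\to B$ of degree $q=p^{k}$ produces $f':S'\to B'$ and $D'$ with $f'_*\sO_{S'}(D')=\sigma^*E=F^{k*}E$, $\deg f'_*\sO_{S'}(D')=q\deg E$, $(D')^2=qD^2$ and $D'\cdot F'=D\cdot F$, while $D'|_{F'}$, a Frobenius twist of $D|_F$, inherits all the hypotheses; hence the inequality for $f'$ is equivalent to that for $f$, and I would assume henceforth that the Harder--Narasimhan quotients $E_i/E_{i-1}$ of $E=f_*\sO_S(D)$ are themselves strongly semi-stable. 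Set $\mu_i=\mu(E_i/E_{i-1})$, $r_i=\mathrm{rk}\,E_i$, $r:=r_n=h^0(D|_F)$, $\mu_{n+1}=0$; should $\mu_n\le0$, I would truncate the filtration at the largest $j$ with $\mu_j>0$ (then $\mu_{\min}(E_j)>0$, so $E_j$ and the divisor $M_j$ below are nef, and $\deg E\le\sum_{i\le j}r_i(\mu_i-\mu_{i+1})$) and rename $j$ as $n$.

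Next I would produce the nef divisors. The evaluation $f^*E\to\sO_S(D)$ together with the inclusions $E_i\subset E$ define rational maps $S\dashrightarrow\mathbb{P}(E_i)$ over $B$; resolving them all on one birational modification $\rho:\widetilde S\to S$ — which changes neither $D^2$, $D\cdot F$ nor $f_*\sO_S(D)$, and under which Lemma~\ref{lem1.1} still applies (its proof uses no minimality) — we may take them to be morphisms $\psi_i:S\to\mathbb{P}(E_i)$. Write $M_i=\psi_i^*\sO_{\mathbb{P}(E_i)}(1)$ and $Z_i=D-M_i$; then the $Z_i$ are effective with $Z_1\ge\cdots\ge Z_n\ge Z_{n+1}:=0$, and $Z_n$ is vertical because $D|_F$ is base-point-free, so that $M_n\cdot F=D\cdot F$. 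As each $E_i$ has strongly semi-stable Harder--Narasimhan quotients, Lemma~\ref{lem1.2} says $\sO_{\mathbb{P}(E_i)}(1)-\mu_i\Gamma_i$ is nef, hence so is its pullback
$$N_i:=D-Z_i-\mu_i F=\psi_i^*\!\left(\sO_{\mathbb{P}(E_i)}(1)-\mu_i\Gamma_i\right).$$
Applying Lemma~\ref{lem1.1} to the $N_i$ gives
$$D^2\ \ge\ \sum_{i=1}^{n}(d_i+d_{i+1})(\mu_i-\mu_{i+1}),\qquad d_i:=N_i\cdot F=M_i\cdot F.$$

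It then remains to estimate the right-hand side, which is where Clifford's theorem and the hypothesis on $A$ enter. Since $D|_F$ is special, $K_F-D|_F$ is effective (Serre duality), so Clifford's inequality gives $r=h^0(D|_F)\le\tfrac12 D\cdot F+1$; as $A=2r-D\cdot F-1$ is a positive integer, this forces $A=1$ and $D\cdot F=2r-2$. For each $i$, $M_i|_F$ is globally generated (being $\psi_i|_F^*\sO(1)$) and carries the $r_i$ sections restricted from $E_i$, so $h^0(M_i|_F)\ge r_i$; moreover $M_i|_F=D|_F-Z_i|_F$ with $Z_i|_F\ge0$, and $M_i|_F$ is again special because $K_F-M_i|_F=(K_F-D|_F)+Z_i|_F$ is effective, whence $d_i=\deg M_i|_F\ge 2(r_i-1)$ by Clifford. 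Combining this with $d_i\le d_{i+1}$, with $d_n=D\cdot F=2r-2$, and with $\deg f_*\sO_S(D)=\deg E\le\sum_{i=1}^{n}r_i(\mu_i-\mu_{i+1})$ (equality when $\mu_n>0$), one is reduced to precisely the numerical situation treated by the combinatorial part of Xiao's proof in \cite{XG}, which delivers
$$D^2\ \ge\ \frac{2(2r-2)}{r}\deg f_*\sO_S(D)\ =\ \frac{2\,D\cdot F}{h^0(D|_F)}\deg f_*\sO_S(D).$$

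The combinatorial estimate in the last step is literally Xiao's, so I expect the genuine difficulty to lie in the Frobenius reduction: one must check that after the base change the total space can be taken to be a smooth (or mild enough) projective surface on which $f'_*\sO_{S'}(D')$ is still $F^{k*}E$ with the intersection numbers transforming as stated. This is immediate for semi-stable fibrations, where the Frobenius base change is again a fibration of the same type; for an arbitrary fibration it is exactly the point where a ``slight modification'' of Xiao's approach is required. Once that is secured, Lemma~\ref{lem1.2} is available for the strongly semi-stable quotients $E_i/E_{i-1}$, and the construction of the $N_i$, Lemma~\ref{lem1.1}, and the combinatorics all proceed exactly as in characteristic $0$.
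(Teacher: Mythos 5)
Your characteristic-zero argument is essentially the paper's own proof of this theorem: the images $\mathcal{L}_i$ of $f^*E_i$ in $\mathcal{O}_S(D)$ (your $M_i$ after a resolution, which is a harmless variant of working on the codimension-two open sets $U_i$), nefness of $N_i=D-Z_i-\mu_iF$ via Lemma \ref{lem1.2}, Clifford's bound $d_i\ge 2r_i-2$, the fact that $Z_n$ is vertical so $d_n=D\cdot F$, and then Xiao's numerical elimination. Your extra observation that Clifford forces $A=1$, i.e.\ $D\cdot F=2h^0(D|_F)-2$, so that the numbers coincide with the canonical case and Xiao's combinatorial computation applies verbatim, is correct (the paper instead runs the elimination of $\mu_1$ and $\mu_n$ directly for any $A\ge 1$). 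However, your truncation device for the case $\mu_n\le 0$ is a genuine flaw: after replacing $n$ by the largest $j$ with $\mu_j>0$, the evaluation $f^*E_j\to\mathcal{O}_S(D)$ is no longer generically surjective along fibers, so $Z_j$ need not be vertical and $d_j$ need not equal $D\cdot F$; moreover the step $D^2\ge N_j^2+2\mu_jD\cdot F$ would require $(N_j+D)\cdot Z_j\ge 0$, which relative nefness of $D$ does not give once $Z_j$ has horizontal components. So the "numerical situation of Xiao" you invoke is not recovered in that case. The truncation is also unnecessary: $\deg f_*\mathcal{O}_S(D)=\sum_i r_i(\mu_i-\mu_{i+1})$ with $\mu_{n+1}:=0$ is an identity, and the two inequalities used in the elimination hold with no sign condition on $\mu_n$ --- this is exactly how the paper argues.

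The second, more serious gap is your positive-characteristic reduction. Base-changing the fibration along $F^k:B\to B$ is precisely the step that the paper points out works without further argument only for semistable fibrations: for general $f$ the fiber product $S\times_{B,F^k}B$ is a singular surface (the base change is inseparable), one must pass to a resolution, and then neither $f'_*\mathcal{O}_{S'}(D')=F^{k*}E$ nor the strong semistability of the Harder--Narasimhan quotients of the new pushforward is automatic; your claim that the inequality for $f'$ is equivalent to that for $f$ is therefore not established, as you yourself concede in your final paragraph. The paper's actual solution (Theorem \ref{thm1.4}) is different and avoids base change of the surface altogether: it pulls back by the absolute Frobenius $F_S^k$ of the same smooth $S$, i.e.\ maps $f^*E_i$ into $F_S^{k*}\mathcal{O}_S(D)=\mathcal{O}_S(p^kD)$, uses Lemma \ref{lem1.4} for the strongly semistable quotients of $F^{k*}E$, and obtains the Clifford estimate not for $\mathcal{L}_i|_F\subset\mathcal{O}_S(p^kD)|_F$ directly but for the subsheaf $L_i\subset\mathcal{O}_S(D)|_F$ generated by the sections whose $p^k$-th powers generate $\mathcal{L}_i|_F$, giving $d_i=p^k\deg L_i\ge p^k(2r_i-2)$; the rest is as in characteristic zero. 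If you restrict your claim to characteristic zero --- the setting in which the paper proves this particular statement --- your proof is correct once the truncation detour is removed; the characteristic-$p$ case as you set it up still has the essential point missing.
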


\begin{proof} For a divisor $D$ on $f:S\to B$, $E=f_*\mathcal{O}_S(D)$ is a vector bundle of rank $h^0(D|_F)$ where $F$ is a general smooth
fiber of $f:S\to B$. Let
$$0:=E_0\subset E_1\subset \cdots \subset E_n=E$$
be the Harder-Narasimhan filtration of $E$ with $r_i=\mathrm{rk}(E_i)$ and $$\mu_i=\mu(E_i/E_{i-1})=\mu_{min}(E_i)\quad (1\le i\le n).$$
Let $\mathcal{L}_i\subset \mathcal{O}_S(D)$ be the image of $f^*E_i$ under sheaf homomorphism
$$f^*E_i\hookrightarrow f^*E=f^*f_*\mathcal{O}_S(D)\to \mathcal{O}_S(D),$$
which is a torsion-free sheaf of rank $1$ and is locally free on an open set $U_i\subset S$ of codimension at least $2$. Thus there is a morphism (over $B$)
$$\phi_i: U_i\to \mathbb{P}(E_i)$$
such that $\phi_i^*\mathcal{O}_{E_i}(1)=\mathcal{L}_i|_{U_i}$, which implies that $c_1(\mathcal{L}_i)-\mu_iF$ is nef by Lemma \ref{lem1.2}. Let
$D=c_1(\mathcal{L}_i)+Z_i$ ($1\le i\le n$). Then we get a sequence of effective divisors $Z_1\geq Z_2\geq \cdots \geq Z_n\geq 0$
and a sequence of rational numbers $\mu_1>\mu_2\cdots >\mu_n$
such that  $$N_i=D-Z_i-\mu_i F\,\,\,\,(1\le i\le n)$$ are nef $\mathbb{Q}$-divisors.
Note $N_i|_F=c_1(\sL_i)|_F\hookrightarrow D|_F$, one has surjection $$H^1(N_i|_F)\twoheadrightarrow H^1(D|_F).$$
Thus $N_i|_F$ is special since $D|_F$ is special, and
$$d_i=N_i\cdot F\ge 2 h^0(\sL_i|_F)-2=2r_i-2, \,\,(i=1,\,...,\,n)$$
by Clifford theorem. Since $D|_F$ is generated by global sections, $Z_n$ is supported on fibers of $f:S\to B$ and
$d_n=D\cdot F:=d_{n+1}$.

When $n=1$, we have $D^2=N^2_1+(D+N_1)\cdot Z_1+2\mu_1D\cdot F$ and
$$D^2\ge 2\mu_1D\cdot F=\frac{2D\cdot F}{h^0(D|_F)}{\rm deg}(f_*\sO_S(D))$$
since $Z_1$ is supported on fibers of $f:S\to B$ and $D$ is a relative nef divisor.
When $n>1$, by the same reason,
$$D^2=N^2_n+2\mu_nD\cdot F+(N_n+D)\cdot Z_n\ge N^2_n+2\mu_nD\cdot F$$
and, by using Lemma \ref{lem1.1} to $N^2_n$, we have
$$\aligned D^2&\ge \sum_{i=1}^{n-1}(d_i+d_{i+1})(\mu_i-\mu_{i+1})+2\mu_n D\cdot F\\
&\ge \sum_{i=1}^{n-1}(2r_i+2r_{i+1}-4)(\mu_i-\mu_{i+1})+2\mu_n D\cdot F\\
&\ge\sum_{i=1}^{n-1}(4r_i-2)(\mu_i-\mu_{i+1})+2\mu_n D\cdot F\\&=
4\sum_{i=1}^nr_i(\mu_i-\mu_{i+1})-2\mu_1-(4h^0(D|_F)-2D\cdot F-2)\mu_n\\&=
4{\rm deg}(f_*\sO_S(D))-2\mu_1-2A\mu_n\endaligned $$
where we use the equality (which is easy to check) that
$$\mathrm{deg}(f_*\sO_S(D))=\sum_{i=1}^n r_i(\mu_i-\mu_{i+1}).$$
Again by $D^2=N^2_n+2\mu_nD\cdot F+(N_n+D)\cdot Z_n$,
apply Lemma \ref{lem1.1} to $(Z_1\ge Z_n\ge 0,\,\mu_1>\mu_n)$, we have
$$D^2\ge (d_1+D\cdot F)(\mu_1-\mu_n)+2D\cdot F\mu_n\ge D\cdot F(\mu_1+\mu_n).$$
By using above two inequalities and eliminating $\mu_1$, we have
$$(2+D\cdot F)D^2-4D\cdot F {\rm deg}(f_*\sO_S(D))\ge -2(A-1)D\cdot F\mu_n$$
By eliminating $\mu_n$ (which is possible since we assume $A>0$), we have
$$(2A+D\cdot F)D^2-4D\cdot F {\rm deg}(f_*\sO_S(D))\ge 2(A-1)D\cdot F\mu_1.$$
By adding above two inequalities and using definition of $A$, we have
$$4h^0(D|_F)D^2-8{\rm deg}(f_*\sO_S(D))\ge 2(A-1)D\cdot F(\mu_1-\mu_n)\ge 0$$
which is what we want.
\end{proof}

\begin{coll}(Xiao's inequality) Let $f:S\to B$ be a relatively minimal fibration of genius $g\ge 2$. Then
$$K^2_{S/B}\ge \frac{4g-4}{g}{\rm deg}(f_*\omega_{S/B}).$$
\end{coll}

\begin{proof} Take $D=K_{S/B}$ (the relative canonical divisor), which satisfies all the assumptions in Theorem \ref{thm1.3}
with $h^0(D|_F)=g$, $D\cdot F=2g-2$ and $\sO_S(D)=\omega_{S/B}$.
\end{proof}

The only obstruction to generalize Xiao's method in positive characteristic is Lemma \ref{lem1.2}, which is not true in positive characteristic since
Frobenius pull-back of a semi-stable bundle may not be semi-stable. However, the following notion of strongly semi-stability enjoy nice property that pull-back under a finite map preserves semi-stability.
\begin{defn} The bundle $E$ is called strongly semi-stable (resp., stable) if its pullback by $k$-th power $F^k$ is semi-stable (resp., stable) for any integer $k\geq 0$, where $F$ is the Frobenius morphism $B\rightarrow B$.
\end{defn}

\begin{lem}(\cite[Theorem~3.1]{LA})\label{lem1.3} For any bundle $E$ on $B$, there exists an integer $k_0$ such that all of quotients $E_i/E_{i-1}$ $(1\leq i \leq n)$
appear in the Harder-Narasimhan filtration $$0:=E_0\subset E_1\subset \cdots \subset E_n=F^{k*}E$$
are strongly semi-stable whenever $k\geq k_0$.
\end{lem}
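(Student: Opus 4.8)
The plan is to track how the Harder--Narasimhan filtration of the iterated Frobenius pullbacks $F^{k*}E$ evolves with $k$, and to show that once degrees are rescaled by $p^{-k}$ it becomes independent of $k$. For each $k$ write $0=E_0^{(k)}\subset E_1^{(k)}\subset\cdots\subset E_{n_k}^{(k)}=F^{k*}E$ for the HN filtration, with slopes $\mu_1^{(k)}>\cdots>\mu_{n_k}^{(k)}$, and let $P_k$ be the piecewise-linear graph with vertices $(\mathrm{rk}\,E_i^{(k)},\,p^{-k}\mathrm{deg}\,E_i^{(k)})$ (the HN polygon, rescaled). The reduction I would use is: \emph{if all $E_i^{(k_0)}/E_{i-1}^{(k_0)}$ are strongly semi-stable, then for every $k\ge k_0$ the HN filtration of $F^{k*}E$ is $F^{(k-k_0)*}$ of that of $F^{k_0*}E$} --- indeed the Frobenius pullback of a strongly semi-stable bundle of slope $\nu$ is semi-stable of slope $p^{(k-k_0)}\nu$, so the pulled-back filtration still has strictly decreasing slopes and hence is the HN filtration, and its graded pieces are again strongly semi-stable. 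So it suffices to produce one such $k_0$.

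\emph{Step 1 (monotonicity).} For any subbundle $V\subseteq F^{k*}E$ the saturation of $F^*V$ in $F^{(k+1)*}E$ has the same rank and degree $\ge p\,\mathrm{deg}\,V$ (on a curve $\mathrm{deg}\,F^*=p\,\mathrm{deg}$); since the HN polygon bounds from above the degree of any subbundle of a given rank, taking $V=E_i^{(k)}$ shows $P_{k+1}\ge P_k$ at every vertex of $P_k$, hence everywhere by concavity of $P_{k+1}$. Moreover $\mathrm{deg}\,F^{k*}E=p^k\mathrm{deg}\,E$, so the endpoints of $P_k$ do not move; in particular $p^{-k}\mu_{max}(F^{k*}E)$ is non-decreasing and $p^{-k}\mu_{min}(F^{k*}E)$ is non-increasing.

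\emph{Step 2 (boundedness).} The analytic input is the canonical Cartier connection $\nabla$ on a Frobenius pullback. If $W$ is semi-stable, no proper step of the HN filtration of $F^*W$ is $\nabla$-invariant --- a $\nabla$-invariant subsheaf descends to a subsheaf of $W$, so has slope $\le p\,\mu(W)=\mu(F^*W)$, whereas intermediate HN steps have strictly larger slope --- so $\nabla$ induces nonzero $\mathcal{O}_B$-linear maps relating consecutive HN graded pieces of $F^*W$ after twisting by $\Omega_B^1$; hence consecutive HN slopes of $F^*W$ differ by at most $\mathrm{deg}\,\Omega_B^1=2g-2$, and $\mu_{max}(F^*W)-\mu_{min}(F^*W)\le(\mathrm{rk}\,W-1)(2g-2)$ (the estimate of Shepherd-Barron and X. Sun). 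Applying this to each HN graded piece of $F^{k*}E$ and using that $\mu_{max}$ (resp. $\mu_{min}$) of a filtered bundle is at most the largest (resp. at least the smallest) $\mu_{max}$ (resp. $\mu_{min}$) of a graded piece, one obtains with $C:=(\mathrm{rk}\,E-1)(2g-2)$
$$p\,\mu_{max}(F^{k*}E)\le\mu_{max}(F^{(k+1)*}E)\le p\,\mu_{max}(F^{k*}E)+C$$
and the dual inequalities for $\mu_{min}$. Telescoping gives $p^{-k}\mu_{max}(F^{k*}E)<\mu_{max}(E)+C/(p-1)$ and $p^{-k}\mu_{min}(F^{k*}E)>\mu_{min}(E)-C/(p-1)$, so the slopes of $P_k$ stay bounded; being concave with fixed endpoints and pointwise non-decreasing, $P_k$ converges to a concave polygon $P_\infty$.

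\emph{Step 3 (stabilization).} It remains to show $P_k=P_\infty$ for $k\gg0$; this is the crux, and I expect it to be the main obstacle, because a non-decreasing bounded sequence of polygons with rational vertices need not stabilize. When $p$ is large compared with $\mathrm{rk}\,E$ and $g$ it is immediate: the width estimate of Step 2 forces the Frobenius pullbacks of distinct HN graded pieces of $F^{k*}E$ to occupy disjoint slope intervals (the gap $\mu_i^{(k)}-\mu_{i+1}^{(k)}$ is $\ge(\mathrm{rk}\,E)^{-2}$, while each interval has length $\le C$), so refining the filtration $\{F^*E_i^{(k)}\}$ already yields the HN filtration of $F^{(k+1)*}E$, and if some $F^*(E_i^{(k)}/E_{i-1}^{(k)})$ is not semi-stable then $n_{k+1}>n_k$; as $n_k$ is then non-decreasing and bounded by $\mathrm{rk}\,E$ it is eventually constant, and once it is constant every $F^*(E_i^{(k)}/E_{i-1}^{(k)})$ is semi-stable, which iterated means every $E_i^{(k)}/E_{i-1}^{(k)}$ is strongly semi-stable. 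For arbitrary $p$ these intervals may overlap and the counting breaks down; one then needs Langer's finer argument, which again uses the connection to bound from below the jump in the HN polygon caused by a non-semi-stable $F^*(E_i^{(k)}/E_{i-1}^{(k)})$ in terms of $\mu_i^{(k)}-\mu_{i+1}^{(k)}$ against $2g-2$, and combines it with a Noetherianity argument over the possible graded pieces (which have bounded rank and, by Step 2, bounded $p^{-k}$-rescaled degree) to conclude that only finitely many jumps can occur. This finiteness is precisely the content of the cited theorem and carries essentially all the difficulty; Steps 1 and 2 are routine.
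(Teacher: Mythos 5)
The paper itself offers no argument for this lemma: it is quoted verbatim from Langer (\cite[Theorem~3.1]{LA}), so the only ``proof'' in the paper is the citation. Measured against that, your sketch is more than the paper does, and its first two steps are sound: the rescaled Harder--Narasimhan polygons $P_k$ of $F^{k*}E$ are indeed non-decreasing (saturation of $F^*E_i^{(k)}$ has degree $\ge p\deg E_i^{(k)}$, and the polygon dominates all subsheaves), and the Cartier-connection estimate (no proper HN step of $F^*W$ is $\nabla$-invariant when $W$ is semi-stable, hence consecutive slopes differ by at most $2g-2$ and $\mu_{max}-\mu_{min}\le({\rm rk}\,W-1)(2g-2)$) gives the boundedness $p^{-k}\mu_{max}(F^{k*}E)\le\mu_{max}(E)+C/(p-1)$ by telescoping. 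Your large-$p$ argument in Step 3 is also correct: when $p$ exceeds roughly $2C({\rm rk}\,E)^2$ the slope intervals of the $F^*(E_i^{(k)}/E_{i-1}^{(k)})$ are disjoint (distinct slopes of subquotients of $F^{k*}E$ differ by at least $({\rm rk}\,E)^{-2}$, multiplied by $p$ after pullback), so refining the pulled-back filtration yields the HN filtration of $F^{(k+1)*}E$, the number of steps $n_k$ is non-decreasing and bounded, and its eventual constancy gives strong semi-stability of the graded pieces. Your preliminary reduction (once the graded pieces at level $k_0$ are strongly semi-stable, the HN filtration at every $k\ge k_0$ is the Frobenius pullback of the one at $k_0$) is also correct and is needed for the ``whenever $k\ge k_0$'' clause.

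The genuine gap is exactly where you flag it: for arbitrary $p$ the stabilization of $P_k$ is not established, and your text resolves it by appealing to ``Langer's finer argument,'' i.e.\ to the very theorem being proved --- as a self-contained proof this is circular at the decisive step. Your worry is well founded: a bounded non-decreasing sequence of concave polygons whose vertices have denominators growing like $p^k$ need not stabilize for trivial lattice reasons, so some additional input (Langer's quantitative control of how much the polygon must jump when some $F^*(E_i^{(k)}/E_{i-1}^{(k)})$ is unstable, against the bound $2g-2$) is indispensable and is not reproduced in the sketch. In the context of this paper, deferring to \cite{LA} is exactly what the authors do, so the citation suffices; but the proposal should not be read as an independent proof of the lemma. (A cosmetic point: the constant $C=({\rm rk}\,E-1)(2g-2)$ should be replaced by $\max\{0,({\rm rk}\,E-1)(2g-2)\}$, the cases $g\le 1$ being trivial since there Frobenius pullback preserves semi-stability.)
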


\begin{lem}\label{lem1.4} For each sub-bundle $E_i$ in the Harder-Narasimhan filtration
$$0:=E_0\subset E_1\subset \cdots \subset E_n=F^{k*}E$$
of $F^{k*}E$ (when $k\ge k_0$), the divisor $\mathcal{O}_{E_i}(1)-\mu_i\Gamma_i$
is a nef $\,\,\mathbb{Q}$-divisor, where $\Gamma_i$ is a fiber of $\mathbb{P}(E_i)\to B$ and $\mu_i=\mu(E_i/E_{i-1})$.
\end{lem}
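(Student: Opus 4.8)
The plan is to reduce the statement to the classical fact that for a \emph{semi-stable} bundle $G$ on a curve the tautological divisor $\mathcal{O}_G(1)-\mu_{\min}(G)\Gamma$ is nef, and then to extract nefness for the sub-bundles $E_i$ exactly as in the proof of Lemma \ref{lem1.2}. The key extra input in characteristic $p$ is Lemma \ref{lem1.3}: when $k\ge k_0$ each graded piece $E_i/E_{i-1}$ of the Harder--Narasimhan filtration of $F^{k*}E$ is \emph{strongly} semi-stable, and strong semi-stability is preserved by arbitrary finite pullbacks. First I would recall why the characteristic-zero argument for $\mathcal{O}_G(1)-\mu_{\min}(G)\Gamma$ nef breaks down: one reduces, after a finite base change $\nu\colon B'\to B$, to showing that a semi-stable bundle of slope $0$ is nef, which in turn follows from the fact that its symmetric powers have no sub-bundles of positive degree; in characteristic $0$ the symmetric powers of a semi-stable bundle stay semi-stable, but in characteristic $p$ this can fail. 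The point is that \emph{strong} semi-stability is precisely what survives: if $G$ is strongly semi-stable of slope $0$, then $\mathrm{Sym}^m G$ is semi-stable for all $m$ (this is standard — one checks it after all Frobenius twists, using that a destabilizing sub-bundle of $\mathrm{Sym}^m G$ would, after enough Frobenius pullbacks, contradict semi-stability of a sufficiently high Frobenius pullback of $G$), so $G$ is a nef bundle and $\mathcal{O}_G(1)$ is a nef divisor on $\mathbb{P}(G)$.

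Concretely, the steps are as follows. Fix $k\ge k_0$ and write $\widetilde E=F^{k*}E$ with HN filtration $0=E_0\subset E_1\subset\cdots\subset E_n=\widetilde E$ and slopes $\mu_1>\cdots>\mu_n$. Step one: for the \emph{last} graded piece, i.e.\ for $G=\widetilde E$ itself composed down to $\widetilde E/E_{n-1}$ — more precisely, I first treat $\mathcal{O}_{\widetilde E}(1)-\mu_n\Gamma$. After a finite base change $\nu\colon B'\to B$ of degree divisible by the denominator of $\mu_n$, there is a line bundle $\mathcal N$ on $B'$ with $\deg\mathcal N=\mu_n\cdot(\text{that degree})$; twisting $\nu^*\widetilde E$ by $\mathcal N^{-1}$ we may assume $\mu_{\min}=0$, and the HN pieces remain strongly semi-stable. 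Step two: show that a bundle $H$ on $B'$ all of whose HN graded pieces are strongly semi-stable of slope $\le 0$ is a nef bundle. By induction on the length of the HN filtration and the fact that an extension of nef bundles is nef, it suffices to treat a \emph{strongly} semi-stable bundle of slope $0$; for such a bundle nefness follows from semi-stability of all its symmetric powers, as recalled above. Hence $\mathcal{O}_{\nu^*\widetilde E\otimes\mathcal N^{-1}}(1)$ is nef, and pushing this back through the finite flat map $\mathbb{P}(\nu^*\widetilde E)\to\mathbb{P}(\widetilde E)$ (which preserves nefness, since nefness of a divisor can be tested after finite pullback) gives that $\mathcal{O}_{\widetilde E}(1)-\mu_n\Gamma$ is nef.

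Step three: pass from $\widetilde E$ to each sub-bundle $E_i$. Here I mimic the proof of Lemma \ref{lem1.2} verbatim: $E_i$ is itself a bundle whose HN filtration is the truncation $0=E_0\subset\cdots\subset E_i$, with $\mu_{\min}(E_i)=\mu_i$, and \emph{all} of its graded pieces $E_j/E_{j-1}$ ($1\le j\le i$) are strongly semi-stable by Lemma \ref{lem1.3}. So Steps one and two apply with $E_i$ in place of $\widetilde E$, yielding that $\mathcal{O}_{E_i}(1)-\mu_i\Gamma_i$ is nef on $\mathbb{P}(E_i)$. That is the assertion. I expect the only genuinely delicate point to be Step two — specifically, the claim that symmetric powers of a strongly semi-stable slope-$0$ bundle are semi-stable, hence the bundle is nef. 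This is where characteristic $p$ is genuinely used and where one must invoke the strong (rather than ordinary) semi-stability supplied by Langer's theorem; everything else (the base change to make slopes integral, extensions of nef bundles being nef, descent of nefness along finite maps, truncation of HN filtrations) is formal and parallels Xiao's original argument.
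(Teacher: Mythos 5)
Your overall architecture is sensible (truncate the Harder--Narasimhan filtration to $E_i$, note $\mu_{\min}(E_i)=\mu_i$ with all graded pieces strongly semi-stable by Lemma \ref{lem1.3}, reduce by a finite base change and a twist to a slope-zero statement, use that extensions of nef bundles are nef and that nefness descends along finite surjective pullbacks), and you correctly identify the key characteristic-$p$ input. But the step you yourself flag as delicate --- Step two --- is where the proposal has a genuine gap, in two places. First, your parenthetical justification that $\mathrm{Sym}^m G$ is semi-stable for a strongly semi-stable slope-zero $G$ is circular: a destabilizing subsheaf of $\mathrm{Sym}^m G$ pulls back to a destabilizing subsheaf of $\mathrm{Sym}^m(F^{k*}G)$, and to derive a contradiction with semi-stability of $F^{k*}G$ you would need precisely the statement that symmetric powers of semi-stable bundles stay semi-stable, which is what can fail in characteristic $p$; the true statement (tensor and symmetric products of strongly semi-stable bundles are strongly semi-stable) is the nontrivial theorem of Ramanan--Ramanathan and must be cited, not re-derived this way. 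Second, even granting it, the implication ``all $\mathrm{Sym}^m G$ semi-stable of slope $0$ $\Rightarrow$ $G$ nef'' is asserted with no argument. Nefness is tested on irreducible curves $C\subset\mathbb{P}(G)$, i.e.\ on multisections, and for rank $\ge 3$ a degree-$d$ multisection does not directly produce a rank-one quotient of $\mathrm{Sym}^d G$; making your route rigorous would require extra work (e.g.\ handling multisections that are inseparable over $B$ via the $p$-th power inclusion $F^{k*}G\subset \mathrm{Sym}^{p^k}G$), none of which is indicated. There is also a sign slip: the pieces must have slope $\ge 0$ (you normalized $\mu_{\min}=0$), not $\le 0$, for nefness.

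The irony is that the fact you state in your opening paragraph --- strong semi-stability is preserved under arbitrary finite pullbacks (factor a finite map of smooth curves into a separable part and a power of Frobenius) --- already yields the lemma directly, with no symmetric powers and no reduction to slope zero: an irreducible curve $C\subset\mathbb{P}(E_i)$ not contained in a fiber gives a finite map $\nu\colon\widetilde{C}\to B$ from its normalization together with a quotient line bundle $\nu^*E_i\twoheadrightarrow L$ with $\deg L=\mathcal{O}_{E_i}(1)\cdot C$, and since all HN pieces of $E_i$ are strongly semi-stable one gets $\mu_{\min}(\nu^*E_i)=\deg(\nu)\,\mu_i$, hence $(\mathcal{O}_{E_i}(1)-\mu_i\Gamma_i)\cdot C=\deg L-\mu_i\deg(\nu)\ge 0$. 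This is exactly the paper's (one-line) proof: it is Miyaoka's argument for Lemma \ref{lem1.2}, modified only by replacing characteristic-zero pullback semi-stability with preservation of strong semi-stability under finite morphisms. You should either argue this way, or, if you keep the symmetric-power route, supply the missing citation and the multisection analysis.
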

\begin{proof}
The proof is just a modification of \cite[Theorem~3.1]{MY} since pull-back of strongly semi-stable bundles under a finite morphism are still strongly semi-stable. One can see \cite[Theorem~3.1, Page~464]{MY} for more details.
\end{proof}

We now can prove, by the same arguments, that Theorem \ref{thm1.3} still holds in positive characteristic.

\begin{thm}\label{thm1.4} Let $D$ be a relative nef divisor on $f:S\to B$ such that $D|_{F}$ is generated by global sections on a general smooth fiber $F$ of
$f:S\to B$. Assume that $D|_{\Gamma}$ is a special divisor on $F$ and $$A=2h^0(D|_{F})-D\cdot F-1>0.$$ Then
$$D^2\ge \frac{2D\cdot F}{h^0(D|_{F})}{\rm deg}(f_*\sO_S(D)).$$
\end{thm}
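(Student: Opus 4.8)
The plan is to run the proof of Theorem \ref{thm1.3} almost verbatim, with $D$ and $E=f_*\sO_S(D)$ replaced by their Frobenius twists $p^kD$ and $F^{k*}E$. Fix $k\ge k_0$ so that, by Lemma \ref{lem1.3}, the Harder--Narasimhan filtration $0=E_0\subset E_1\subset\cdots\subset E_n=\hat E$ of $\hat E:=F^{k*}E$ has strongly semi-stable quotients $E_i/E_{i-1}$; write $\mu_i=\mu(E_i/E_{i-1})=\mu_{min}(E_i)$ and $r_i=\mathrm{rk}(E_i)$. Let $F_S\colon S\to S$ be the Frobenius of $S$; then $f\circ F_S=F\circ f$, so $f^*\hat E=f^*(F^{k*}E)=F_S^{k*}(f^*E)$, and applying $F_S^{k*}$ to the evaluation map $f^*E\to\sO_S(D)$ produces a homomorphism $f^*\hat E\to F_S^{k*}\sO_S(D)=\sO_S(p^kD)$. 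Put $D'=p^kD$. Exactly as in the proof of Theorem \ref{thm1.3}, let $\sL_i\subset\sO_S(D')$ be the image of $f^*E_i\hookrightarrow f^*\hat E\to\sO_S(D')$: it is torsion-free of rank one, locally free off a closed set $U_i$ of codimension $\ge 2$, and induces $\phi_i\colon U_i\to\mathbb{P}(E_i)$ over $B$ with $\phi_i^*\sO_{E_i}(1)=\sL_i|_{U_i}$. Writing $D'=c_1(\sL_i)+Z_i$ yields effective divisors $Z_1\ge\cdots\ge Z_n\ge 0$, and now Lemma \ref{lem1.4}, in place of Lemma \ref{lem1.2}, shows that $N_i:=D'-Z_i-\mu_iF=c_1(\sL_i)-\mu_iF$ is nef.

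The only point requiring care is the Clifford estimate for $d_i:=N_i\cdot F=c_1(\sL_i)\cdot F$. On a general fiber $F$, the homomorphism $f^*\hat E\to\sO_S(D')$ restricts to the $k$-th Frobenius twist of the evaluation map of $D|_F$; since under Frobenius the zero divisor of each section is multiplied by $p^k$, the base divisor of $\sL_i|_F$ is $p^k$ times the base divisor of the $r_i$-dimensional subsystem $(E_i)_b\subseteq H^0(D|_F)$. Hence $c_1(\sL_i)|_F=p^kM_i$, where $M_i$ is the moving part of $(E_i)_b$; this $M_i$ has $h^0\ge r_i$ and injects into $D|_F$, hence is special, so Clifford gives $\deg M_i\ge 2r_i-2$, i.e.\ $d_i\ge p^k(2r_i-2)$. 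For $i=n$, global generation of $D|_F$ forces $\sL_n=\sO_S(D')$ on a general fiber, so $Z_n$ is supported on finitely many fibers and $d_n=D'\cdot F=p^k(D\cdot F)$; combined with relative nefness of $D$ this gives $(D')^2\ge N_n^2+2\mu_n(D'\cdot F)$, just as in characteristic zero.

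Now the numerical part of Xiao's argument carries over with no change, because every quantity picks up the same power of $p^k$: $(D')^2=p^{2k}D^2$, $D'\cdot F=p^k(D\cdot F)$, $\deg\hat E=p^k\deg(f_*\sO_S(D))$, while $\mathrm{rk}(\hat E)=h^0(D|_F)$. The case $n=1$ is immediate exactly as there. For $n>1$, applying Lemma \ref{lem1.1} as before yields $(D')^2\ge p^k\bigl(4\deg\hat E-2\mu_1-2A\mu_n\bigr)$ together with $(D')^2\ge p^k(D\cdot F)(\mu_1+\mu_n)$, where the constant $A=2h^0(D|_F)-D\cdot F-1$ is literally the one from characteristic zero, because the ``$D\cdot F$'' and the rank entering $A$ are here $p^k(D\cdot F)$ and $\mathrm{rk}(\hat E)=h^0(D|_F)$, which scale compatibly with all the other invariants. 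Dividing through by $p^{2k}$ and setting $\nu_1=\mu_1/p^k$, $\nu_n=\mu_n/p^k$ reproduces verbatim the two inequalities obtained in the proof of Theorem \ref{thm1.3}, with $\deg(f_*\sO_S(D))$ in place of the degree there; the same elimination of $\nu_1$ and then $\nu_n$ (legitimate since $A>0$) therefore gives $D^2\ge\frac{2D\cdot F}{h^0(D|_F)}\deg(f_*\sO_S(D))$.

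The main obstacle, and the only genuinely new ingredient, is the fiberwise analysis of the second paragraph: one must verify that the Frobenius twist of the evaluation map, restricted to a general fiber, yields a rank-one subsheaf whose first Chern class is $p^k$ times a special divisor carrying at least $r_i=\mathrm{rk}(E_i)$ sections, so that Clifford's theorem still applies with the correct ranks. Granting this, together with Lemma \ref{lem1.3} for the existence of $k_0$ and Lemma \ref{lem1.4} for nefness, the remainder is merely carrying the powers of $p^k$ through Xiao's computation, with no further modification of the original argument.
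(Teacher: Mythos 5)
Your proposal is correct and follows essentially the same route as the paper: choose $k\ge k_0$ via Langer's theorem, pull back by Frobenius so the HN quotients are strongly semi-stable, use Lemma \ref{lem1.4} for nefness of $N_i$, observe that $\sL_i|_F$ is the $p^k$-th Frobenius twist of a special subsystem with at least $r_i$ sections (so Clifford gives $d_i\ge p^k(2r_i-2)$), and then run Xiao's elimination argument with every quantity scaled by $p^k$. This matches the paper's proof step for step, including the key fiberwise Clifford estimate, so no further comment is needed.
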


\begin{proof} It is enough to prove the theorem when $f:S\to B$ is defined over a base field $\mathbf{k}$ of characteristic $p>0$.
Let $F_S: S\rightarrow S$ denote the Frobenius morphism over $\mathbf{k}$. Then  we have the following commutative diagram (for any integer $k\ge k_0$):$$
\xymatrix{
  S \ar[d]_{f} \ar[r]^{F^k_S} & S \ar[d]^{f} \\
  B \ar[r]^{F^k} & B   }.$$
For a divisor $D$ on $f:S\to B$, $E=f_*\mathcal{O}_S(D)$ is a vector bundle of rank $h^0(D|_F)$ where $F$ is a general smooth
fiber of $f:S\to B$. Let
$$0:=E_0\subset E_1\subset \cdots \subset E_n=F^{k*}E$$
be the Harder-Narasimhan filtration of $F^{k*}E$ with $r_i=\mathrm{rk}(E_i)$ and $$\mu_i=\mu(E_i/E_{i-1})=\mu_{min}(E_i)\quad (1\le i\le n)$$
where we choose $k\ge k_0$ such that all quotients $E_i/E_{i-1}$ appears in above filtration are strongly semi-stable.

Let $\mathcal{L}_i\subset F^{k*}_S\mathcal{O}_S(D)$ be the image of $f^*E_i$ under sheaf homomorphism
$$f^*E_i\hookrightarrow f^*F^{k*}E=F^{k*}_Sf^*f_*\mathcal{O}_S(D)\to F^{k*}_S\mathcal{O}_S(D)=\mathcal{O}_S(p^kD),$$
which is a torsion-free sheaf of rank $1$ and is locally free on an open set $U_i\subset S$ of codimension at least $2$. Thus there is a morphism (over $B$)
$$\phi_i: U_i\to \mathbb{P}(E_i)$$
such that $\phi_i^*\mathcal{O}_{E_i}(1)=\mathcal{L}_i|_{U_i}$, which implies that $c_1(\mathcal{L}_i)-\mu_iF$ is nef by Lemma \ref{lem1.4}. Let
$p^kD=c_1(\mathcal{L}_i)+Z_i$ ($1\le i\le n$). Then we get a sequence of effective divisors $Z_1\geq Z_2\geq \cdots \geq Z_n\geq 0$
and a sequence of rational numbers $\mu_1>\mu_2\cdots >\mu_n$
such that  $$N_i=p^kD-Z_i-\mu_i F\,\,\,\,(1\le i\le n)$$ are nef $\mathbb{Q}$-divisors. Let $d_i=N_i\cdot F={\rm deg}(\sL_i|_{F})$, then
$$d_n=p^kD\cdot F:=d_{n+1}$$
since $D|_F$ is generated by global sections and $Z_n$ is supported on fibers of $f:S\to B$. For $1\le i<n$, there are $r_i={\rm rk}(E_i)$ sections
$$\{s_1,\,\,...\,,\, \,s_{r_i}\}\in H^0(\mathcal{O}_S(D)|_{F})$$ such that $\sL_i|_{F}\subset\mathcal{O}_S(p^kD)|_{F}$ is generated by the global sections
$s^{p^k}_1,\,\,...\,,\,\,s_{r_i}^{p^k}$.

Since $\mathcal{O}_S(D)|_{F}$ is special, the sub-sheaf $L_i\subset\mathcal{O}_S(D)|_{F}$ generated by
$$\{s_1,\,\,...\,,\, \,s_{r_i}\}\in H^0(\mathcal{O}_S(D)|_{F})$$
is special. Thus ${\rm deg}(L_i)\ge 2r_i-2$ by Clifford theorem. Then we have
$$d_i=N_i\cdot F={\rm deg}(\sL_i|_{F})=p^k{\rm deg}(L_i)\ge p^k(2r_i-2)\,\,(1\le i\le n).$$

When $n=1$, which means that $E=f_*\sO_S(D)$ is strongly semi-stable, the same proof of Theorem \ref{thm1.3} implies
$$D^2\ge 2\mu_1D\cdot F=\frac{2D\cdot F}{h^0(D|_{F})}{\rm deg}(f_*\sO_S(D)).$$

When $n>1$, since $Z_n$ is supported on fibers of $f:S\to B$, we have
$$p^{2k}D^2=N_n^2+2\mu_n p^kD\cdot F+(N_n+p^kD)\cdot Z_n\ge N^2_n+2\mu_np^kD\cdot F.$$
By $d_i\ge p^k(2r_i-2)$ and using Lemma \ref{lem1.1} to $N^2_n$, we have
$$\aligned p^{2k}D^2&\ge \sum_{i=1}^{n-1}(d_i+d_{i+1})(\mu_i-\mu_{i+1})+2\mu_n p^kD\cdot F\\
&\ge \sum_{i=1}^{n-1}p^k(2r_i+2r_{i+1}-4)(\mu_i-\mu_{i+1})+2\mu_np^k D\cdot F\\
&\ge p^k\sum_{i=1}^{n-1}(4r_i-2)(\mu_i-\mu_{i+1})+2\mu_n p^kD\cdot F\\&=
4p^k\sum_{i=1}^nr_i(\mu_i-\mu_{i+1})-2p^k\mu_1-p^k(4h^0(D|_{F})-2D\cdot F-2)\mu_n\\&=
4p^k{\rm deg}(F^{k*}f_*\sO_S(D))-2p^k\mu_1-2p^kA\mu_n\endaligned $$
where we set $\mu_{n+1}=0$ and use the equality (which is easy to check)
$$\mathrm{deg}(F^{k*}f_*\sO_S(D))=\sum_{i=1}^n r_i(\mu_i-\mu_{i+1}).$$
By $p^{2k}D^2=N_n^2+2\mu_n p^kD\cdot F+(N_n+p^kD)\cdot Z_n$,
apply Lemma \ref{lem1.1} to $(Z_1\ge Z_n\ge 0,\,\mu_1>\mu_n)$, we have
$$p^{2k}D^2\ge (d_1+p^kD\cdot F)(\mu_1-\mu_n)+2p^kD\cdot F\mu_n\ge p^kD\cdot F(\mu_1+\mu_n).$$
Altogether, we have the following inequalities
\begin{align*} {p^kD^2\ge 4{\rm deg}(F^{k*}f_*\sO_S(D))-2\mu_1-2A\mu_n} \tag{2.1}\end{align*}
\begin{align*} {p^kD^2\ge D\cdot F(\mu_1+\mu_n)} \tag{2.2}\end{align*}
By using (2.1) and (2.2), eliminating $\mu_1$, we have
$$(2+D\cdot F)p^kD^2-4D\cdot F {\rm deg}(F^{k*}f_*\sO_S(D))\ge -2(A-1)D\cdot F\mu_n$$
By eliminating $\mu_n$ (which is possible since we assume $A>0$), we have
$$(2A+D\cdot F)p^kD^2-4D\cdot F {\rm deg}(F^{k*}f_*\sO_S(D))\ge 2(A-1)D\cdot F\mu_1.$$
By adding above two inequalities and using definition of $A$, we have
$$4h^0(D|_{F})p^kD^2-8{\rm deg}(F^{k*}f_*\sO_S(D))\ge 2(A-1)D\cdot F(\mu_1-\mu_n)\ge 0$$
which and ${\rm deg}(F^{k*}f_*\sO_S(D))=p^k{\rm deg}(f_*\sO_S(D))$ imply
$$D^2\ge \frac{2D\cdot F}{h^0(D|_{F})}{\rm deg}(f_*\sO_S(D)).$$
\end{proof}

\begin{coll} Let $f:S\to B$ be a relatively minimal fibration of genius $g\ge 2$ over an algebraically closed field of characteristic $p\ge 0$. Then
$$K^2_{S/B}\ge \frac{4g-4}{g}{\rm deg}(f_*\omega_{S/B}).$$
\end{coll}

\begin{proof} Take $D=K_{S/B}$ (the relative canonical divisor), which satisfies all the assumptions in Theorem \ref{thm1.4}
with $h^0(D|_{F})=g$, $D\cdot F =2g-2$ and $\sO_S(D)=\omega_{S/B}$.
\end{proof}

\section{Slopes of non-hyperelliptic fibrations}

Xiao has constructed examples (cf.\cite[Example~2]{XG}) of hyperelliptic fiberation $f:S\to B$ such that
$$K^2_{S/B}= \frac{4g-4}{g}{\rm deg}(f_*\omega_{S/B})$$
and has conjectured (cf. \cite[Conjecture~1]{XG}) that the inequality must be strict for non-hyperelliptic fibrations.

\begin{prop} Let $f:S\to B$ be a non-hyperelliptic fibration of genus $g\ge 3$, if $f_*\omega_{S/B}$ is strongly semi-stable, then
\begin{align*}{K^2_{S/B}\ge \frac{5g-6}{g}{\rm deg}(f_*\omega_{S/B}).} \tag{3.1}\end{align*}
\end{prop}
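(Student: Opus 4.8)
The plan is to run, in the strongly semi-stable case, the same mechanism used by Lu--Zuo and Konno in characteristic zero: combine the second multiplication map with the fact that under the hypothesis the Harder--Narasimhan filtration of $E:=f_*\omega_{S/B}$ is trivial. Write $E=f_*\omega_{S/B}$, a bundle of rank $g$ with $\deg E=\chi_f$; we may assume $\chi_f>0$, for otherwise $K^2_{S/B}\ge 0$ already yields (3.1). Recall the standard relative Riemann--Roch identity
$$\deg f_*(\omega_{S/B}^{\otimes 2})=K^2_{S/B}+\chi_f,$$
which holds because $R^1f_*\omega_{S/B}^{\otimes 2}=0$: by relative duality $R^1f_*\omega_{S/B}^{\otimes 2}\cong(f_*\omega_{S/B}^{\vee})^{\vee}$, and $f_*\omega_{S/B}^{\vee}$ is a torsion-free sheaf on $B$ of rank $h^0(\omega_F^{\vee})=0$, hence zero. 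The object that carries the non-hyperelliptic hypothesis is the multiplication map $m\colon S^2E\to f_*(\omega_{S/B}^{\otimes 2})$; set $\sG:=\mathrm{im}(m)\subseteq f_*(\omega_{S/B}^{\otimes 2})$, a vector bundle on $B$ since the target is torsion free on a smooth curve.

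Next I would estimate $\deg\sG$ from both sides. For the upper bound: on a general fibre $F$ the map $m$ restricts to $S^2H^0(\omega_F)\to H^0(\omega_F^{\otimes 2})$, which is surjective by Noether's theorem on projective normality of the canonical curve (valid in every characteristic), because $F$ is non-hyperelliptic of genus $g\ge 3$; hence $\sG$ has generic rank $h^0(\omega_F^{\otimes 2})=3g-3=\mathrm{rk}\,f_*(\omega_{S/B}^{\otimes 2})$, so $\sG$ is a full-rank subsheaf of $f_*(\omega_{S/B}^{\otimes 2})$ and
$$\deg\sG\le\deg f_*(\omega_{S/B}^{\otimes 2})=K^2_{S/B}+\chi_f.$$
For the lower bound: since $E$ is \emph{strongly} semi-stable, so is $E\otimes E$, hence so is $S^2E$ (symmetric and tensor powers of strongly semi-stable bundles remain strongly semi-stable); in particular $S^2E$ is semi-stable of slope $\mu(S^2E)=2\mu(E)=2\chi_f/g$. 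A quotient of a semi-stable bundle has slope at least that of the bundle, so $\mu(\sG)\ge 2\chi_f/g$, i.e.
$$\deg\sG\ge(3g-3)\cdot\frac{2\chi_f}{g}=\frac{6(g-1)}{g}\chi_f.$$

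Combining the two estimates gives $\dfrac{6(g-1)}{g}\chi_f\le K^2_{S/B}+\chi_f$, that is
$$K^2_{S/B}\ge\Big(\frac{6(g-1)}{g}-1\Big)\chi_f=\frac{5g-6}{g}\chi_f=\frac{5g-6}{g}\deg(f_*\omega_{S/B}),$$
which is (3.1). The main obstacle is making the two inputs work in characteristic $p$: one needs $m|_F$ to be surjective on every non-hyperelliptic fibre (Noether's theorem, which does hold), and one needs $S^2E$ to stay semi-stable, which is precisely why $E$ must be assumed \emph{strongly} semi-stable — in characteristic $p$ symmetric powers of merely semi-stable bundles can fail to be semi-stable, so the hypothesis cannot be weakened. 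In the general setting of Theorem \ref{thm3}, where $f_*\omega_{S/B}$ need not be strongly semi-stable, the present direct argument breaks down: one first pulls back by a high power of Frobenius so that the Harder--Narasimhan quotients become strongly semi-stable, replaces $m$ by the modified second multiplication map $F^{k*}S^2f_*\omega_{S/B}\to F^{k*}f_*(\omega_{S/B}^{\otimes 2})$ from the introduction, and then feeds the resulting rank/degree estimates piece by piece into Xiao's filtration inequality (Lemma \ref{lem1.1}).
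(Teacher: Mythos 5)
Your argument is essentially the paper's own proof: you bound $\deg$ of the image of the second multiplication map $S^2f_*\omega_{S/B}\to f_*(\omega_{S/B}^{\otimes 2})$ from above by $K^2_{S/B}+\deg f_*\omega_{S/B}$ (full-rank subsheaf, via Noether's theorem) and from below by $(3g-3)\cdot 2\mu(f_*\omega_{S/B})$ using semi-stability of $S^2f_*\omega_{S/B}$, exactly as in (3.2)--(3.3). Your added justifications (relative duality for the degree formula, and strong semi-stability of $E$ passing to $E\otimes E$ and hence to its quotient $S^2E$ in characteristic $p$) are correct and only make explicit what the paper leaves implicit.
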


\begin{proof} By Max Noether's theorem, the second multiplication map
$$\varrho:S^2f_*\omega_{S/B} \to f_*(\omega_{S/B}^{\otimes2})$$
is generically surjective for non-hyperelliptic fibrations $f: S\to B$. Let
$$S^2f_*\omega_{S/B}\twoheadrightarrow \sF:=\varrho(S^2f_*\omega_{S/B})\subset f_*(\omega_{S/B}^{\otimes2}).$$
Then $\sF$ is a vector bundle of rank ${\rm rk}(f_*(\omega_{S/B}^{\otimes2}))=3g-3$, and
\begin{align*} {{\rm deg}(\sF)\le {\rm deg}(f_*(\omega_{S/B}^{\otimes2}))=K^2_{S/B}+{\rm deg}(f_*\omega_{S/B}).} \tag{3.2}\end{align*}
On the other hand, semi-stability of $S^2f_*\omega_{S/B}$ implies
\begin{align*}{{\rm deg}(\sF)\ge (3g-3)\mu(S^2f_*\omega_{S/B})=\frac{6g-6}{g}{\rm deg}(f_*\omega_{S/B}).} \tag{3.3}\end{align*}
Then (3.2) and (3.3) imply the required inequality (3.1).
\end{proof}

If $E=f_*\omega_{S/B}$ is not strongly semi-stable, let
\begin{align*}{0:=E_0\subset E_1\subset E_2\subset \cdots \subset E_{n-1}\subset \widetilde{E}=F^{k*}E}\tag{3.4}\end{align*}
be the Harder-Narasimhan filtration of $F^{k*}E$ with $r_i=\mathrm{rk}(E_i)$ and $$\mu_i=\mu(E_i/E_{i-1})=\mu_{min}(E_i)\quad (1\le i\le n)$$
where we choose $k\ge k_0$ such that all quotients $E_i/E_{i-1}$ appears in above filtration are strongly semi-stable.
The second multiplication map induces a multiplication map, which is still denoted by $\varrho$,
$$\varrho:S^2\widetilde{E}=F^{k*}S^2f_*\omega_{S/B} \to F^{k*}f_*(\omega_{S/B}^{\otimes 2}).$$
Let $\widetilde{\sF}=F^{k*}\sF=\varrho(S^2\widetilde{E})\subset F^{k*}f_*(\omega_{S/B}^{\otimes 2})$ be the image of $\varrho$, then
$$K^2_{S/B}\ge \frac{1}{p^k}{\rm deg}(\widetilde{\sF})-{\rm deg}(f_*\omega_{S/B}).$$
Thus the question is to find a good lower bound of ${\rm deg}(\widetilde{\sF})$, where
$$0\to\widetilde{\sK}:={\rm ker}(\varrho)\to S^2\widetilde{E}\xrightarrow{\varrho} \widetilde{\sF}\to 0.$$
Note that for any filtration
\begin{align*}{0:=\sF_0\subset \sF_1\subset \sF_2\subset \cdots \subset \sF_{n-1}\subset \sF_n:=\widetilde{\sF}}\tag{3.5}\end{align*}
of $\widetilde{\sF}$, ${\rm deg}(\sF_i/\sF_{i-1})\ge ({\rm rk}(\sF_i)-{\rm rk}(\sF_{i-1}))\mu_{min}(\sF_i)$. If $\mu_{min}(\sF_i)\ge a_i$,
\begin{align*}{{\rm deg}(\widetilde{\sF})\ge\sum^n_{i=1}{\rm rk}(\sF_i)(a_i-a_{i+1}).}\tag{3.6}\end{align*}
One of choices of the filtration (3.5) is induced by the Harder-Narasimhan filtration (3.4) of $\widetilde{E}=F^{k*}f_*\omega_{S/B}$
(similar with \cite{L.Z}):
$$\sF_i=\varrho(E_i\otimes E_i)\subset \widetilde{\sF}.$$
The following lemma implies that $\mu_{min}(\sF_i)\ge 2\mu_i$ for all $1\le i\le n$.

\begin{lem}
Let $\sE_1$ and $\sE_2$ be two bundles over a smooth projective curve
with all quotients in the Harder-Narasimhan of $\sE_1$ and $\sE_2$ are
strongly semi-stable. Then we have $$\mu_{min}(\sE_1\otimes \sE_2)=
\mu_{min}(\sE_1)+\mu_{min}(\sE_2).$$
\end{lem}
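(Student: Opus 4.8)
The plan is to prove the two inequalities separately; the bound $\mu_{min}(\sE_1\otimes\sE_2)\le \mu_{min}(\sE_1)+\mu_{min}(\sE_2)$ is elementary and characteristic-free, while the reverse inequality is where the strong semi-stability hypothesis is used. For the easy direction, let $G_i$ denote the last quotient in the Harder--Narasimhan filtration of $\sE_i$, a (strongly) semi-stable bundle with $\mu(G_i)=\mu_{min}(\sE_i)$. The surjection $\sE_1\otimes\sE_2\twoheadrightarrow G_1\otimes G_2$ onto a vector bundle of slope $\mu_{min}(\sE_1)+\mu_{min}(\sE_2)$ gives the bound, since $\mu_{min}(V)$ equals the minimum of the slopes of the nonzero locally free quotients of $V$ and slope is additive under tensor product.

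For the reverse inequality, set $c=\mu_{min}(\sE_1)+\mu_{min}(\sE_2)$ and refine the tensor product of the two Harder--Narasimhan filtrations to a filtration of $\sE_1\otimes\sE_2$ by subbundles whose successive quotients are the bundles $(\sE_{1,a}/\sE_{1,a-1})\otimes(\sE_{2,b}/\sE_{2,b-1})$, where $\sE_{i,\bullet}$ denotes the Harder--Narasimhan filtration of $\sE_i$. Each tensor factor is strongly semi-stable by hypothesis, so each such quotient is strongly semi-stable --- in particular semi-stable --- of slope $\mu_a(\sE_1)+\mu_b(\sE_2)\ge c$. One then invokes the elementary fact that if a vector bundle $V$ admits a filtration by subbundles all of whose graded pieces are semi-stable of slope $\ge c$, then every locally free quotient of $V$ has slope $\ge c$, i.e. $\mu_{min}(V)\ge c$; this follows by induction on the length of the filtration, using additivity of rank and degree in short exact sequences together with the fact that a locally free quotient of a semi-stable bundle has slope at least that of the bundle (and that the torsion subsheaf has nonnegative degree). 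Applied to $V=\sE_1\otimes\sE_2$ this yields $\mu_{min}(\sE_1\otimes\sE_2)\ge c$, which combined with the first paragraph proves the lemma.

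The one substantial input --- and the only place where $char(\mathbf{k})=p>0$ intervenes --- is the assertion that the tensor product of two strongly semi-stable bundles on a smooth projective curve is again strongly semi-stable; equivalently, this is the case $n=1$ of the present lemma, which says that $\sE_1\otimes\sE_2$ is semi-stable whenever $\sE_1,\sE_2$ are strongly semi-stable (one then passes to all Frobenius pull-backs, using that Frobenius pull-back commutes with $\otimes$). In characteristic $0$ this is the classical theorem that the tensor product of semi-stable bundles is semi-stable; in characteristic $p$ it is a standard consequence of the theory of strongly semi-stable sheaves (see, e.g., \cite{LA}). I expect this tensor-product fact to be the main obstacle to a fully self-contained proof; every other step is formal and works verbatim in any characteristic.
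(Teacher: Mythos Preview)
Your argument is correct, and the overall structure differs from the paper's. Both proofs handle the inequality $\mu_{min}(\sE_1\otimes\sE_2)\le\mu_{min}(\sE_1)+\mu_{min}(\sE_2)$ the same (elementary) way and both ultimately rest on the same key input, namely that the tensor product of two strongly semi-stable bundles is semi-stable. For the reverse inequality, however, you proceed directly: you equip $\sE_1\otimes\sE_2$ with the filtration induced by the two Harder--Narasimhan filtrations, observe that each graded piece is a tensor product of strongly semi-stable bundles and hence semi-stable of slope at least $c=\mu_{min}(\sE_1)+\mu_{min}(\sE_2)$, and conclude $\mu_{min}(\sE_1\otimes\sE_2)\ge c$ by the elementary lemma on filtrations. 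The paper instead first applies Langer's theorem to pass to a Frobenius pullback $F^{k*}(\sE_1\otimes\sE_2)$ whose Harder--Narasimhan quotients are strongly semi-stable, takes the minimal destabilizing quotient $\sQ$, and then uses the Hom--tensor adjunction $F^{k*}\sE_1\to(F^{k*}\sE_2)^\vee\otimes\sQ$ together with the $\mu_{max}$-analogue of the lemma (applied to $(F^{k*}\sE_2)^\vee\otimes\sQ$) to bound $\mu(\sQ)$ from below. Your route avoids both the Frobenius pullback step and the duality manoeuvre, and is self-contained once the tensor-product fact is granted; the paper's route is closer in spirit to the arguments in \cite{S} that it cites, but is somewhat more roundabout for the statement at hand.
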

\begin{proof} It is clear that $\mu_{min}(\sE_1\otimes \sE_2)\le
\mu_{min}(\sE_1)+\mu_{min}(\sE_2)$ by \cite[Proposition~3.5 (3)]{S}.
Thus is enough we to show
$$\mu_{min}(\sE_1\otimes \sE_2)\ge
\mu_{min}(\sE_1)+\mu_{min}(\sE_2).$$ By Lemma \ref{lem1.3},
there is a $k_0$ such that for all $k\geq k_0$, all quotients in the
Harder-Narasimhan filtration of $F^{k\ast}(\sE_1\otimes \sE_2)$ are
strongly semi-stable.

Let $F^{k\ast}(\sE_1\otimes \sE_2)\twoheadrightarrow\sQ$ be the strongly semi-stable quotient with
$$\mu(\sQ)=\mu_{min}(F^{k\ast}(\sE_1\otimes \sE_2))\le p^k\mu_{min}(\sE_1\otimes\sE_2).$$
Applying \cite[Proposition~3.5(4)]{S} on the nontrivial morphism
$$F^{k\ast}\sE_1\rightarrow (F^{k\ast}\sE_2)^{\vee}\otimes \mathcal{Q},$$
we have $\mu_{max}((F^{k\ast}\sE_2)^{\vee}\otimes \mathcal{Q})\geq \mu_{min}(F^{k\ast}\sE_1)$
and $$\mu_{max}((F^{k\ast}\sE_2)^{\vee}\otimes \mathcal{Q})=\mu(\sQ)-\mu_{min}(F^{k\ast}\sE_2)$$
since all quotients $gr_i^{\mathrm{HN}}{(\sE_2)}$ and $\mathcal{Q}$
are strongly semi-stable. Then
$$\mu(\mathcal{Q})\geq\mu_{min}(F^{k*}\sE_1)+\mu_{min}(F^{k*}\sE_2)= p^k(\mu_{min}(\sE_1)+\mu_{min}(\sE_2)),$$
where the last equality holds since all $gr_i^{\mathrm{HN}}{(\sE_1)}$ and $gr_i^{\mathrm{HN}}{(\sE_2)}$ are strongly semi-stable, which implies that
$$\mu_{min}(\sE_1\otimes \sE_2)\ge
\mu_{min}(\sE_1)+\mu_{min}(\sE_2).$$
\end{proof}

A lemma of \cite{L.Z} provides the lower bound of ${\rm rk}(\sF_i)$. To state it, recall that in the proof of Theorem \ref{thm1.4}, each $E_i$
defines a morphism
$$\phi_{L_i}:F\to \mathbb{P}^{r_i-1}$$
on the general fiber $F$ of $f:S\to B$, where $L_i\subset \omega_F$ is generated by global sections
$\{s_1,\,\,...\,,\, \,s_{r_i}\}\subset
H^0(\sO_S(K_{S/B})|_{F})=H^0(\omega_F).$
\begin{defn} Let $\tau_i:C_i\to \phi_{L_i}(F)$ be the normalization of $\phi_{L_i}(F)$, $$g_i=g(C_i)$$ be the genius of $C_i$ and
$\psi_i:F\to C_i$ be the morphism such that
$$\phi_{L_i}=\tau_i\cdot\psi_i.$$
Let $c_i={\rm deg}(\phi_{L_i})={\rm deg}(\psi_i)$. Then $c_i|c_{i-1}$ for all $1\le i\le n$ and
$$r_1<r_2<\cdots <r_{n-1}<r_n=g,\quad g_1\le g_2\le\cdots\le g_{n-1}\le g_n=g.$$
\end{defn}

\begin{lem}(\cite[Lemma~2.6]{L.Z})
For each $1\le i\le n$, we have $$\mathrm{rk}(\mathcal{F}_i)\geq \begin{cases}
3r_i-3,    &\text{if} \ r_i\le g_i+1 ;\\
2r_i+g_i-1, & \text{if} \ r_i\geq g_i+2.
\end{cases}$$
In particular, if $\phi_{L_i}$ is a birational morphism, then
$$\mathrm{rk}(\mathcal{F}_i)\geq 3r_i-3.$$
\end{lem}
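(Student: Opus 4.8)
The plan is to bound $\mathrm{rk}(\mathcal{F}_i) = \mathrm{rk}\,\varrho(E_i\otimes E_i)$ from below by computing the dimension of the image of the multiplication map $S^2 H^0(L_i) \to H^0(L_i^{\otimes 2})$ on the general fiber $F$, where $L_i\subset\omega_F$ is the sub-line-bundle generated by $\{s_1,\dots,s_{r_i}\}$. Since $\mathcal{F}_i$ is defined fiberwise as the image of $S^2 E_i \to F^{k*}f_*(\omega_{S/B}^{\otimes 2})$, its generic rank equals $\dim$ of the image of $S^2\langle s_1^{p^k},\dots,s_{r_i}^{p^k}\rangle$ inside $H^0(\omega_F^{\otimes 2p^k})$; raising to the $p^k$-th power is injective on the relevant spaces, so this equals the rank of the image of $S^2 H^0(F,L_i)\to H^0(F,L_i^{\otimes 2})$. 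Thus the problem reduces to a statement about the map $\mu_{L_i}:S^2 H^0(C_i, M_i)\to H^0(C_i, M_i^{\otimes 2})$ on the normalization $C_i$, where $M_i$ is the line bundle on $C_i$ pulled back from $\mathcal{O}(1)$ under $\tau_i$ and $H^0(C_i,M_i)$ has dimension $r_i$ — the point being that $L_i = \psi_i^* M_i$ and $\psi_i^*$ identifies the two section spaces.

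Next I would split into the two cases according to whether the morphism $\phi_{L_i}:F\to\mathbb P^{r_i-1}$ (equivalently the embedding/map $C_i\to\mathbb P^{r_i-1}$ given by $|M_i|$, assumed very ample or at least birational onto its image after normalizing) exhibits $C_i$ as projectively normal-ish or not. When $r_i\le g_i+1$ the line bundle $M_i$ on $C_i$ is special (since $\deg M_i\le \deg\omega_F$-type bounds force $h^1\ge 1$), and one invokes the classical base-point-free pencil trick / the theorem on the quadratic normality: for a curve of genus $g_i$ embedded by a special linear system of dimension $r_i-1$, the image of $S^2 H^0(M_i)$ has dimension at least $3r_i-3$. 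Concretely, pick a base-point-free pencil inside $|M_i|$ (or use two general sections $x_0,x_1$); the multiplication by $x_0,x_1$ gives a four-term exact sequence whose analysis yields $\dim\mathrm{Im}\,\mu_{L_i}\ge 2r_i-1 + (\dim\mathrm{Im}\,\text{lower map})$, and iterating / counting gives $3r_i-3$. When $r_i\ge g_i+2$, the bundle $M_i$ is nonspecial on $C_i$, so $h^0(M_i^{\otimes 2}) = 2\deg M_i + 1 - g_i$; combined with the known $\deg M_i = r_i + g_i - 1$ (from Riemann–Roch, $r_i = h^0(M_i) = \deg M_i + 1 - g_i$) and the surjectivity of $\mu_{L_i}$ for nonspecial $M_i$ onto a large subspace (by e.g. a result of Mumford/Green on quadratic normality of nonspecial embeddings, valid in all characteristics here since we only need a rank bound, not full surjectivity), one gets $\dim\mathrm{Im}\,\mu_{L_i}\ge 2\deg M_i + 1 - g_i = 2r_i + g_i - 1$.

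The main obstacle I anticipate is twofold. First, one must be careful that the auxiliary curve $C_i$ is genuinely a smooth curve of genus $g_i$ with $M_i$ computing the stated numerics — this requires that $|M_i|$ separates points generically (birational onto image), which is exactly why the inequalities are stated on the normalization $C_i$ rather than on the possibly-singular image $\phi_{L_i}(F)$, and why $r_i, g_i$ must be tracked through $\psi_i^*$. Second, the classical statements about $\dim\mathrm{Im}(S^2 H^0 \to H^0(\cdot^{\otimes 2}))$ for curves (both the special case via the base-point-free pencil trick and the nonspecial case) need to hold in arbitrary characteristic; the base-point-free pencil trick is characteristic-free, and the nonspecial quadratic-generation statement only requires general position of points plus a dimension count, so no characteristic hypothesis intrudes — but I would double-check that Clifford-type inequalities and the $h^0$ count on $C_i$ do not secretly use $\mathrm{char}=0$. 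Once these are in place, the final "in particular" clause is immediate: if $\phi_{L_i}$ is birational then $\psi_i = \mathrm{id}$, $c_i=1$, $C_i\cong F$ up to normalization, so $g_i$ could still be less than $g$ only if $\phi_{L_i}(F)$ has worse-than-expected singularities — but in the genuinely birational case the relevant bound $3r_i-3$ holds in both branches whenever $r_i\le g$, since $3r_i-3\le 2r_i+g_i-1$ is equivalent to $r_i\le g_i+2$, and for $r_i\ge g_i+2$ one has $2r_i+g_i-1\ge 3r_i-3$, so the stated minimum is always $\ge 3r_i-3$ when $g_i$ is close to $g$; I would phrase this last step as a direct consequence of the case division rather than a separate argument.
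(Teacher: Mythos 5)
You should first note that the paper itself gives no proof of this statement: it is quoted verbatim from \cite[Lemma~2.6]{L.Z}, so the benchmark is the argument there (a Castelnuovo-type count on the image curve), not anything internal to this paper. Your opening reduction is sound: over a perfect field taking $p^k$-th powers preserves linear independence, so $\mathrm{rk}(\mathcal{F}_i)$ equals the dimension of the image of $S^2V_i\to H^0(F,L_i^{\otimes 2})$ with $V_i=\langle s_1,\dots,s_{r_i}\rangle$, and since $V_i=\psi_i^*\bar V_i$ where $\bar V_i=\tau_i^*H^0(\mathcal{O}_{\mathbb{P}^{r_i-1}}(1))\subset H^0(C_i,M_i)$ has dimension $r_i$ and $\psi_i^*$ is injective on sections, the problem does live on $C_i$. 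From there, however, the proposal has genuine gaps. You silently replace the $r_i$-dimensional subspace $\bar V_i$ by the complete series $H^0(C_i,M_i)$ (``$H^0(C_i,M_i)$ has dimension $r_i$'', ``$\deg M_i=r_i+g_i-1$''), which is unjustified, and a lower bound for the image of $S^2H^0(M_i)$ does not bound the image of $S^2\bar V_i$. Moreover $r_i\le g_i+1$ does not imply $M_i$ is special (only the converse direction follows from Clifford), so the dichotomy in the lemma is not the special/nonspecial dichotomy your two branches are built on.

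More seriously, the two mechanisms you invoke cannot produce the stated numbers. Base-point-free pencil trick/Hopf-type estimates give only $\dim\mathrm{Im}(S^2\bar V_i)\ge 2r_i-1$ and are blind to the birationality of $\tau_i$; birationality is indispensable, since for a base-point-free special system that is composed with a degree-two map (e.g.\ a multiple of the $g^1_2$ on a hyperelliptic curve, mapping $2:1$ onto a rational normal curve) the image of $S^2$ has dimension exactly $2r_i-1<3r_i-3$. Likewise, nonspeciality of $M_i$ alone gives no surjectivity statement for $S^2H^0(M_i)\to H^0(M_i^{\otimes 2})$ (Castelnuovo--Mumford-type results need $\deg M_i\ge 2g_i+1$), and in any case you need the subspace $\bar V_i$, not the complete system. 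The classical route to exactly these bounds, which is what \cite{L.Z} uses, is to restrict to a general hyperplane section: the $d_i'$ points of $\tau_i(C_i)\cap H$ impose at least $\min(d_i',2r_i-3)$ conditions on quadrics of $H\cong\mathbb{P}^{r_i-2}$ by the general position theorem, whence $\dim\mathrm{Im}(S^2\bar V_i)\ge r_i+\min(d_i',2r_i-3)$, and Clifford plus Riemann--Roch convert this into $\min(3r_i-3,\,2r_i+g_i-1)$ (cf.\ \cite[Chapter~III]{ACGJ}). The general position theorem is precisely the characteristic-$p$-sensitive ingredient (Rathmann; cf.\ \cite{L.S}), and your sketch never identifies it, while the inputs you do name are characteristic-free but too weak. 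Finally, in the ``in particular'' clause your discussion is off: if $\phi_{L_i}$ is birational then $\psi_i$ is a birational morphism of smooth projective curves, hence an isomorphism, so $g_i=g$ exactly (singularities of $\phi_{L_i}(F)$ are irrelevant because $g_i$ is the genus of the normalization), and then $r_i\le g=g_i$ puts you in the first branch directly.
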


\begin{lem}
Let $d'_i$ be the degree of $\phi_{L_i}(F)\subset\mathbb{P}^{r_i-1}$,
$\ell=\mathrm{min}\{\,i\,\,| \,\,c_i=1\}$, $$I=\{1\le i\le\ell-1\,|\,\,r_i\ge g_i+2\,\}.$$
Then we have
$$p^kK^2_{S/B}\ge\sum_{i\in I}(3r_i+2g_i-2)(\mu_i-\mu_{i+1})+\sum_{i\notin I}(5r_i-6)(\mu_i-\mu_{i+1}),$$
$$p^kK^2_{S/B}\ge 2\sum_{i\in I}c_id'_i(\mu_i-\mu_{i+1})+\sum_{i\notin I}(4r_i-2)(\mu_i-\mu_{i+1})-2\mu_n.$$

\end{lem}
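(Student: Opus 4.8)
The strategy is to combine the two lower bounds for $\mathrm{rk}(\sF_i)$ coming from \cite[Lemma~2.6]{L.Z} with the two ways of estimating $\mathrm{deg}(\widetilde{\sF})$, exactly as in the proof of Theorem~\ref{thm1.4}, and then feed the outcome into the inequality (3.6) via $\mu_{min}(\sF_i)\ge 2\mu_i$. More precisely, I would apply (3.6) with the filtration $\sF_i=\varrho(E_i\otimes E_i)$ and $a_i=2\mu_i$ (legitimate by the tensor-product lemma above, which gives $\mu_{min}(\sF_i)\ge\mu_{min}(E_i\otimes E_i)=2\mu_i$), obtaining
$$p^kK^2_{S/B}\ge \mathrm{deg}(\widetilde{\sF})-p^k\,\mathrm{deg}(f_*\omega_{S/B})\ge 2\sum_{i=1}^n \mathrm{rk}(\sF_i)(\mu_i-\mu_{i+1})-p^k\,\mathrm{deg}(f_*\omega_{S/B}).$$
For indices $i\notin I$ one has either $r_i\le g_i+1$ (so $\mathrm{rk}(\sF_i)\ge 3r_i-3$) or $i\ge\ell$ (so $\phi_{L_i}$ is birational and again $\mathrm{rk}(\sF_i)\ge 3r_i-3$); for $i\in I$ one has $\mathrm{rk}(\sF_i)\ge 2r_i+g_i-1$. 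Substituting and using $\mathrm{deg}(f_*\omega_{S/B})=\frac1{p^k}\sum_i r_i(\mu_i-\mu_{i+1})$, the term $2\sum(3r_i-3)(\mu_i-\mu_{i+1})-\sum r_i(\mu_i-\mu_{i+1})=\sum(5r_i-6)(\mu_i-\mu_{i+1})$ produces the $(5r_i-6)$ coefficients for $i\notin I$, while the $i\in I$ contribution $2(2r_i+g_i-1)-r_i=3r_i+2g_i-2$ gives the first displayed inequality. (The $\mu_n$ correction, if any, is absorbed as in Theorem~\ref{thm1.4}; one checks the boundary term $\mu_n=\mu_{n+1}$ convention is consistent.)

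\textbf{Second inequality.} For this one I would not use the rank bound $\mathrm{rk}(\sF_i)\ge 3r_i-3$ but rather a degree-of-image bound: the image curve $\phi_{L_i}(F)\subset\mathbb P^{r_i-1}$ is nondegenerate of degree $d'_i$, and pulling back a hyperplane section shows $L_i$ has degree $\ge c_id'_i$, hence $\deg(\sL_i|_F)=p^k c_i d'_i$ and $d_i=N_i\cdot F\ge p^k c_i d'_i$ for $i\notin I$-type indices (those where $\phi_{L_i}$ maps onto a curve whose $2$-Veronese-type image is large), while for the remaining indices we fall back on the Clifford-type bound $d_i\ge p^k(2r_i-2)$ used in Theorem~\ref{thm1.4}. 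Applying Lemma~\ref{lem1.1} to $N_n^2$ together with the decomposition $p^{2k}K^2_{S/B}=N_n^2+2\mu_n p^kD\cdot F+(N_n+p^kD)\cdot Z_n\ge N_n^2+2\mu_n p^kD\cdot F$ (since $Z_n$ is fibral and $D=K_{S/B}$ is relatively nef), and dividing by $p^k$, yields
$$p^kK^2_{S/B}\ge \sum_{i=1}^{n-1}(d_i+d_{i+1})\tfrac1{p^k}(\mu_i-\mu_{i+1})+2\mu_n D\cdot F - (\text{correction}),$$
and substituting the two lower bounds for $d_i$ according to whether $i\in I$ or not, and bookkeeping the $-2\mu_n$ term exactly as in the chain of inequalities (2.1) in the proof of Theorem~\ref{thm1.4}, gives the second displayed inequality.

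\textbf{Main obstacle.} The routine part is the algebra of substituting rank/degree bounds into (3.6) and Lemma~\ref{lem1.1}; the genuinely delicate point is justifying the degree estimate $d_i\ge p^k c_i d'_i$ and the correct value of $d'_i$, i.e.\ making precise how $\mathrm{rk}(\sF_i)=\mathrm{rk}(\varrho(E_i\otimes E_i))$ and the degree $d'_i$ of the projective image curve interact — one must be careful that the sections $s_1^{p^k},\dots,s_{r_i}^{p^k}$ generating $\sL_i|_F$ give a map that factors through the $p^k$-power Frobenius of $\phi_{L_i}$, so the pulled-back hyperplane class has degree $p^k c_i d'_i$ rather than $c_i d'_i$. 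I expect this Frobenius-twisting bookkeeping, and the verification that $\phi_{L_i}$ is birational for $i\ge\ell$ (so that the two sums split along $I$ exactly as claimed), to be where the real content lies; everything else is a mechanical reprise of the proof of Theorem~\ref{thm1.4}.
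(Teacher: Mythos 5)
Your proposal follows essentially the same route as the paper: the first inequality comes from (3.6) with $a_i=2\mu_i$ together with the rank estimates of Lemma~6 (splitting $i\notin I$ into the cases $r_i\le g_i+1$ and $i\ge\ell$, where $\phi_{L_i}$ is birational), and the second comes from applying Lemma~\ref{lem1.1} as in Theorem~\ref{thm1.4} with $d_i=p^kc_id'_i$, $d_i\le d_{i+1}$, and the Clifford bound $c_id'_i\ge 2r_i-2$, the $-2\mu_n$ arising from including the $i=n$ term. The only blemish is a wording slip in your second paragraph where the roles of $i\in I$ (where one keeps $2c_id'_i$) and $i\notin I$ (where one uses Clifford) appear interchanged, but your final substitution ``according to whether $i\in I$ or not'' is the correct one.
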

\begin{proof}
The first inequality is from (3.6) by taking $a_i=2\mu_i$ and using estimate of ${\rm rk}(\sF_i)$ in Lemma 6. The second inequality is
from $$p^{2k}K^2_{S/B}\ge \sum^{n-1}_{i=1}(d_i+d_{i+1})(\mu_i-\mu_{i+1})+p^k(4g-4)\mu_n$$
by using $d_i\le d_{i+1}$, $d_i=p^kc_id_i'$ and $c_id'_i\ge 2r_i-2$.
\end{proof}

\begin{prop} If ${\rm min}\{\,c_i\,|\,i\in I\}\ge 3$, then
$$K^2_{S/B}\ge \frac{9(g-1)}{2(g+1)}{\rm deg}f_*\omega_{S/B}.$$
\end{prop}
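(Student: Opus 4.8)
The plan is to combine the two inequalities of the preceding lemma by a suitable weighted average, and then to absorb the resulting $\mu_1$--term by adding a multiple of inequality $(2.2)$ from the proof of Theorem~\ref{thm1.4} applied to $D=K_{S/B}$ (so $D\cdot F=2g-2$). It is convenient to set $\delta_i:=\mu_i-\mu_{i+1}\ge 0$ for $1\le i\le n$ (with $\mu_{n+1}=0$), so that $\sum_{i=1}^n\delta_i=\mu_1$, $\delta_n=\mu_n$, and $\sum_{i=1}^n r_i\delta_i={\rm deg}(F^{k*}f_*\omega_{S/B})=p^k{\rm deg}(f_*\omega_{S/B})$.

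The only new ingredient is the hypothesis $c_i\ge 3$ for $i\in I$. Since the $r_i$ sections $s_1,\dots,s_{r_i}$ cutting out $\phi_{L_i}(F)$ are linearly independent, $\phi_{L_i}(F)$ is a nondegenerate irreducible curve in $\mathbb{P}^{r_i-1}$, hence $d_i'\ge r_i-1$; together with $c_i\ge 3$ this yields $2c_id_i'\ge 6r_i-6$ for every $i\in I$. Substituting this into the second inequality of the preceding lemma, and using $g_i\ge 0$ in the first one, I obtain
\begin{align*}
p^kK^2_{S/B}&\ge\sum_{i\in I}(6r_i-6)\delta_i+\sum_{i\notin I}(4r_i-2)\delta_i-2\mu_n,\\
p^kK^2_{S/B}&\ge\sum_{i\in I}(3r_i-2)\delta_i+\sum_{i\notin I}(5r_i-6)\delta_i.
\end{align*}
Averaging these two inequalities with weight $\tfrac12$ each, the coefficient of every $\delta_i$ becomes exactly $\tfrac92 r_i-4$ (this is precisely where $c_i\ge 3$ is needed, so that the $i\in I$ and $i\notin I$ coefficients coincide), while the remaining constant becomes $-\mu_n$. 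Summing over $i$ and using $\sum r_i\delta_i=p^k{\rm deg}(f_*\omega_{S/B})$ and $\sum\delta_i=\mu_1$, I get
$$p^kK^2_{S/B}\ \ge\ \tfrac92\,p^k{\rm deg}(f_*\omega_{S/B})-4\mu_1-\mu_n.$$

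Finally I take $\tfrac{g-1}{g+1}$ times this inequality and add $\tfrac{2}{g+1}$ times $(2.2)$, i.e. $\tfrac{2}{g+1}(2g-2)(\mu_1+\mu_n)$. The coefficient of $\mu_1$ is $\tfrac{2(2g-2)}{g+1}-\tfrac{4(g-1)}{g+1}=0$, the coefficient of $\mu_n$ is $\tfrac{2(2g-2)}{g+1}-\tfrac{g-1}{g+1}=\tfrac{3(g-1)}{g+1}\ge 0$, and the coefficient of $p^k{\rm deg}(f_*\omega_{S/B})$ is $\tfrac{9(g-1)}{2(g+1)}$. Hence $p^kK^2_{S/B}\ge\tfrac{9(g-1)}{2(g+1)}p^k{\rm deg}(f_*\omega_{S/B})$, and dividing by $p^k$ gives the claim.

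I do not expect a genuine obstacle here: the whole argument is a nonnegative linear combination of three inequalities already at our disposal. The points to watch are the elementary bound $d_i'\ge r_i-1$ (nondegeneracy), the verification that after averaging the $\delta_i$--coefficients line up to a single value $\tfrac92 r_i-4$ --- which fails as soon as some $c_i=2$, and is exactly why that case must be handled separately --- and the fact that $\mu_n=\mu_{min}(F^{k*}f_*\omega_{S/B})\ge 0$ (nefness of $f_*\omega_{S/B}$), which lets us discard the leftover $\tfrac{3(g-1)}{g+1}\mu_n$.
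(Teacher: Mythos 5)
Your derivation of the averaged inequality $p^kK^2_{S/B}\ge \tfrac92\,p^k\deg f_*\omega_{S/B}-4\mu_1-\mu_n$ is exactly the paper's (3.7): you use $g_i\ge 0$ in the first inequality of the lemma and $2c_id_i'\ge 6r_i-6$ (from $c_i\ge 3$, $d_i'\ge r_i-1$) in the second, and the coefficients line up to $\tfrac92 r_i-4$ on both index sets. The problem is the last step. You discard the leftover term $\tfrac{3(g-1)}{g+1}\mu_n$ by asserting $\mu_n=\mu_{min}(F^{k*}f_*\omega_{S/B})\ge 0$ ``by nefness of $f_*\omega_{S/B}$''. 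That is Fujita/Xiao semipositivity, which is a characteristic-zero theorem; in positive characteristic --- the very setting this paper is written for, and Theorem 3 is claimed in any characteristic --- semipositivity of $f_*\omega_{S/B}$ fails in general (Moret--Bailly's semistable genus-$2$ fibrations over $\mathbb{P}^1$ have a quotient line bundle of negative degree), so $\mu_n$ may be negative and the term cannot simply be dropped. Note that throughout Section 2 the authors are careful never to assume $\mu_n\ge 0$ (that is why they ``eliminate $\mu_n$'' rather than discard it), so your appeal to nefness is a genuine gap, not a shortcut they silently allow.

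The gap is local and fixable along the lines of the paper: besides (3.7) and $p^kK^2_{S/B}\ge (2g-2)(\mu_1+\mu_n)$ (your (2.2)), the paper also invokes $p^kK^2_{S/B}\ge (2g-2)\mu_1$, which follows from Lemma 1 (equivalently from nefness of $K_{S/B}$ and of $N_1$, with no sign hypothesis on $\mu_n$). Taking the combination with weights $\tfrac{g-1}{g+1}$, $\tfrac{1}{2(g+1)}$ and $\tfrac{3}{2(g+1)}$ on these three inequalities kills the coefficients of both $\mu_1$ and $\mu_n$ identically and yields $K^2_{S/B}\ge \tfrac{9(g-1)}{2(g+1)}\deg f_*\omega_{S/B}$ regardless of the sign of $\mu_n$. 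With that replacement your argument coincides with the paper's proof.
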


\begin{proof} When ${\rm min}\{\,c_i\,|\,i\in I\}\ge 3$, use $d'_i\ge r_i-1$ and Lemma 7,
$$p^kK^2_{S/B}\ge \sum_{i\in I}(3r_i-2)(\mu_i-\mu_{i+1})+\sum_{i\notin I}(5r_i-6)(\mu_i-\mu_{i+1})$$
$$p^kK^2_{S/B}\ge \sum_{i\in I}(6r_i-6)(\mu_i-\mu_{i+1})+\sum_{i\notin I}(4r_i-2)(\mu_i-\mu_{i+1})-2\mu_n.$$
Take the average of above two inequalities, we have
\begin{align*} {K^2_{S/B}\ge \frac{9}{2}{\rm deg}f_*\omega_{S/B}-\frac{4\mu_1+\mu_n}{p^k}.} \tag{3.7}\end{align*}
On the other hand, by Lemma 1, we have
$K^2_{S/B}\ge \frac{(2g-2)(\mu_1+\mu_n)}{p^k}$ and
$K^2_{S/B}\ge \frac{2g-2}{p^k}\mu_1$, which and (3.7) implies
the required inequality.
\end{proof}

\begin{prop} If ${\rm min}\{\,c_i\,|\,i\in I_1\}=2$ and $g_i\ge \frac{g-1}{4}$ for $i\in I$ with $c_i=2$.
Then we have
$$K_{S/B}^2>\frac{9(g-1)}{2(g+1)}\mathrm{deg}f_{\ast}\omega_{S/B}.$$
\end{prop}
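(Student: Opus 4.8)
The plan is to repeat the proof of the preceding Proposition, this time keeping the term $2g_i$ in the first inequality of Lemma~7 and sharpening the second one at the indices $i\in I$ with $c_i=2$. The crucial new input is the estimate $d'_i\ge r_i+g_i-1$ for such $i$: since $\phi_{L_i}$ factors through $\psi_i$ we may write $L_i=\psi_i^*M_i$ with $M_i$ a line bundle of degree $d'_i$ on the genus $g_i$ curve $C_i$ and $h^0(M_i)\ge r_i$; if $M_i$ is non-special then $d'_i=h^0(M_i)+g_i-1\ge r_i+g_i-1$ by Riemann--Roch on $C_i$, while if $M_i$ is special then Clifford's theorem on $C_i$ gives $d'_i\ge 2r_i-2$, which is $\ge r_i+g_i-1$ because $r_i\ge g_i+2$. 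Hence $2c_id'_i\ge 4(r_i+g_i-1)$ when $c_i=2$ and $2c_id'_i\ge 6(r_i-1)$ when $c_i\ge 3$, so the second inequality of Lemma~7 may be replaced by
$$\aligned p^kK^2_{S/B}\ge{}&\sum_{i\in I,\,c_i=2}(4r_i+4g_i-4)(\mu_i-\mu_{i+1})+\sum_{i\in I,\,c_i\ge 3}(6r_i-6)(\mu_i-\mu_{i+1})\\ &+\sum_{i\notin I}(4r_i-2)(\mu_i-\mu_{i+1})-2\mu_n.\endaligned$$

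First I would average this with the first inequality of Lemma~7. The averaged coefficient of $\mu_i-\mu_{i+1}$ equals $\frac{9}{2}r_i-4$ for $i\notin I$, equals $\frac{9}{2}r_i-4+g_i$ for $i\in I$ with $c_i\ge 3$, and equals $\frac{9}{2}r_i-4+(3g_i+1-r_i)$ for $i\in I$ with $c_i=2$. Summing and using $\sum_i r_i(\mu_i-\mu_{i+1})=p^k\,{\rm deg}f_*\omega_{S/B}$ we obtain, with $\Sigma:=\sum_{i\in I,\,c_i=2}(3g_i+1-r_i)(\mu_i-\mu_{i+1})+\sum_{i\in I,\,c_i\ge 3}g_i(\mu_i-\mu_{i+1})$,
$$K^2_{S/B}\ge \frac{9}{2}\,{\rm deg}f_*\omega_{S/B}-\frac{4\mu_1+\mu_n-\Sigma}{p^k}.$$
Combining $(g-1)$ times this with the two inequalities $p^kK^2_{S/B}\ge(2g-2)(\mu_1+\mu_n)$ and $p^kK^2_{S/B}\ge(2g-2)\mu_1$ of Lemma~1, and eliminating $\mu_1$ exactly as in the preceding Proposition, gives
$$(g+1)K^2_{S/B}\ge \frac{9(g-1)}{2}\,{\rm deg}f_*\omega_{S/B}+\frac{(g-1)(\mu_n+\Sigma)}{p^k},$$
so the proof reduces to showing $\mu_n+\Sigma>0$.

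To control $\Sigma$ I would use that $r_i\le g-g_i$ for $i\in I$ with $c_i=2$: from $\deg L_i=2d'_i\le 2g-2$ we get $d'_i\le g-1$, whence $r_i\le d'_i-g_i+1\le g-g_i$ in the non-special case, while $r_i\le d'_i/2+1$ together with $r_i\ge g_i+2$ again forces $r_i\le g-g_i$ in the special case; with $g_i\ge(g-1)/4$ this yields $3g_i+1-r_i\ge 0$, so $\Sigma\ge 0$ (which already gives the non-strict inequality). For strictness, suppose first that some $i_0\in I$ with $c_{i_0}=2$ satisfies $L_{i_0}\subsetneq\omega_F$; then $\deg L_{i_0}\le 2g-3$, so $d'_{i_0}\le g-2$ and the estimates sharpen to $r_{i_0}\le g-g_{i_0}-1$, hence $3g_{i_0}+1-r_{i_0}\ge 4g_{i_0}+2-g\ge 1$ and $\Sigma\ge\mu_{i_0}-\mu_{i_0+1}>0$. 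If instead every $i_0\in I$ with $c_{i_0}=2$ has $L_{i_0}=\omega_F$, fix one such $i_0$; then $d_{i_0}=N_{i_0}\cdot F=p^k(2g-2)$, and applying Lemma~1 to the sequence $Z_1\ge Z_{i_0}\ge 0$ with slopes $\mu_1>\mu_{i_0}>0$ gives the sharper estimate $p^kK^2_{S/B}\ge(2g-2)(\mu_1+\mu_{i_0})$; using it in place of $p^kK^2_{S/B}\ge(2g-2)(\mu_1+\mu_n)$ in the combination above and eliminating $\mu_1$ yields $(g+1)K^2_{S/B}\ge\frac{9(g-1)}{2}{\rm deg}f_*\omega_{S/B}+\frac{(g-1)(2\mu_{i_0}-\mu_n+\Sigma)}{p^k}$, and $2\mu_{i_0}-\mu_n\ge\mu_{i_0}>0$ concludes.

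The main obstacle is exactly this boundary case $L_{i_0}=\omega_F$: there $d'_{i_0}=g-1$, $M_{i_0}$ is non-special, $\psi_{i_0}$ is a ramified double cover with $g_{i_0}=(g-1)/4$ and $r_{i_0}=g-g_{i_0}$, and all numerical estimates of the generic argument degenerate to equalities, so the strict inequality cannot be read off from $\Sigma$ and must instead be extracted from the Harder--Narasimhan slopes through the refined use of Lemma~1. Verifying that this refined application is legitimate — in particular that $d_{i_0}=p^k(2g-2)$ and that the truncated sequence $Z_1\ge Z_{i_0}\ge 0$ meets the hypotheses of Lemma~1 — is where the argument needs most care.
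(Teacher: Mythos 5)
Up to the non-strict inequality your argument is essentially the paper's. Your key estimate $d'_i\ge r_i+g_i-1$ for $i\in I$ with $c_i=2$ (writing $L_i=\psi_i^{*}M_i$ and applying Riemann--Roch or Clifford on $C_i$, using $r_i\ge g_i+2$) is exactly the bound the paper extracts from Castelnuovo's inequality, and your observation that $g_i\ge\frac{g-1}{4}$ together with $d'_i\le g-1$ forces $3g_i+1-r_i\ge 0$ is the paper's $3g_i\ge r_i-1$. Averaging the two inequalities of Lemma 7 and eliminating $\mu_1$ against the two consequences of Lemma 1 is precisely the argument of Proposition 2, which is all the paper does here. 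One caveat even at this stage: the elimination should be organized as in Proposition 2, i.e. $4\mu_1+\mu_n=3\mu_1+(\mu_1+\mu_n)$, which leaves no residual $\mu_n$ and yields $K_{S/B}^2\ge\frac{9(g-1)}{2(g+1)}\deg f_*\omega_{S/B}$ with no sign hypothesis on $\mu_n$; your displayed combination keeps $+(g-1)\mu_n/p^k$ on the right, so your parenthetical claim that $\Sigma\ge 0$ ``already gives the non-strict inequality'' tacitly assumes $\mu_n\ge 0$.

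The genuine gap is in the strictness part, and it is exactly this unproved positivity. In your case (a) you reduce to $\mu_n+\Sigma>0$ and only establish $\Sigma>0$, so you need $\mu_n\ge 0$; in your case (b) you need $\mu_{i_0}>0$ both to apply Lemma 1 to the truncated chain $Z_1\ge Z_{i_0}\ge 0$ (the lemma requires the slope sequence to decrease strictly to $0$) and to conclude from $2\mu_{i_0}-\mu_n\ge\mu_{i_0}>0$. But $\mu_n\ge 0$ (equivalently nefness of $F^{k*}f_*\omega_{S/B}$, i.e. of $f_*\omega_{S/B}$) is precisely the semipositivity statement that is delicate in characteristic $p$; the paper never invokes it, and its elimination scheme is designed to avoid it. Without such an input (or some other source of strictness), your reduction does not close, and the degenerate configuration you describe ($L_{i_0}=\omega_F$, $d'_{i_0}=g-1$, $g_{i_0}=\frac{g-1}{4}$) remains untreated. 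To be fair, the paper's own proof does not establish the strict inequality either --- it only asserts that ``the required inequality follows the same arguments in Proposition 2'', which gives $\ge$, and only the non-strict bound is used in Theorem 3 --- so your proposal matches the paper on what is actually proved and used, while its additional strictness argument rests on an unjustified positivity assumption.
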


\begin{proof} It is a matter to estimate $d'_i$. Since $\phi_{L_i}(F)\subset \mathbb{P}^{r_i-1}$ is an irreducible
non-degenerate curve of degree $d_i'$, we have in general $d'_i\ge r_i-1$ and more precisely the so called Castelnuovo's bound
$$d'_i-1\ge \frac{g_i}{m_i}+\frac{m_i+1}{2}(r_i-2)$$
where $m_i=[\frac{d'_i-1}{r_i-2}]$ is the positive integer defined by $d'_i-1=m_i(r_i-2)+\varepsilon_i$ with $0\le\varepsilon_i<1$
(see \cite[Chapter~III, 2]{ACGJ}).

Let $I_1=\{\,i\in I\,|\,c_i=2\}$. Then for any $i\in I_1$,
$d'_i\ge r_i-1+g_i$ by Castelnuovo's bound (since $r_i\ge g_i+2\ge 2$). On the other hand,
$$8g_i\ge2g-2\ge 2d'_i\ge 2r_i-2+2g_i$$
implies that $3g_i\ge r_i-1$, which implies that
$$(3r_i+2g_i-2) +2c_id_i'\ge 9r_i-8, \quad \forall\,\,i\in I_1,$$
thus $(3r_i+2g_i-2) +2c_id_i'\ge 9r_i-8$ for all $i\in I$. Then the required inequality follows the same
arguments in Proposition 2.
\end{proof}

\begin{prop}(\cite[Theorem~3.1, 3.2]{C.S}) If there is an $i\in I$ such that $c_i=2$ and $g_i<\frac{g-1}{4}$. Then
$$ K_{S/B}^2\geq \frac{4(g-1)}{g-g_i}\mathrm{deg}f_{\ast}\omega_{S/B}.$$
\end{prop}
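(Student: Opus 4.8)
This statement is \cite[Theorems~3.1 and~3.2]{C.S}, and the natural line of attack — the one taken there — is to promote the fibrewise degree-two map into a \emph{relative} double cover and then run Xiao's method downstairs, exactly as Xiao does for hyperelliptic fibrations. The hypothesis $c_i=2$ means that on the general fibre $F$ the morphism $\psi_i\colon F\to C_i$ is a degree-two cover of a smooth curve of genus $g_i$, and this cover already lives over the generic point of $B$, since $C_i$ is the normalization of the image of the sub-system $L_i\subset|K_{F_\eta}|$. Spreading this out and taking a relative compactification, I would produce, after replacing $S$ by a suitable birational model and $S_1$ by a smooth one, a fibration $h\colon S_1\to B$ whose general fibre has genus $g_i$ together with a degree-two morphism $\theta\colon S\to S_1$ over $B$. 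Standard double-cover formulae then give $\theta_*\sO_S=\sO_{S_1}\oplus\sL^{-1}$ with $\sL^{\otimes2}=\sO_{S_1}(\Delta)$ for the branch divisor $\Delta$, and $\omega_{S/B}=\theta^*(\omega_{S_1/B}\otimes\sL)$; Riemann--Hurwitz on $F\to C_i$ forces $\sL\cdot C_i=g+1-2g_i$.

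Setting $M=K_{S_1/B}+c_1(\sL)$, one gets $K_{S/B}=\theta^*M$, hence $K_{S/B}^2=2M^2$ and $M\cdot C_i=g-1$, while the projection formula gives
$$f_*\omega_{S/B}=h_*\sO_{S_1}(M)\oplus h_*\omega_{S_1/B},\qquad \deg f_*\omega_{S/B}=\deg h_*\sO_{S_1}(M)+\chi_h.$$
So the task becomes: bound $2M^2$ below by $\tfrac{4(g-1)}{g-g_i}\bigl(\deg h_*\sO_{S_1}(M)+\chi_h\bigr)$. I would attack this with three ingredients. First, relative Riemann--Roch (Leray together with Riemann--Roch on the surface $S_1$), using that $M|_{C_i}$ is nonspecial of degree $g-1$ so that $h^0(M|_{C_i})=g-g_i$ and $R^1h_*\sO_{S_1}(M)$ is torsion, to express $\deg h_*\sO_{S_1}(M)$ in terms of $M^2$, $\chi_h$, $K_{S_1/B}\!\cdot\! c_1(\sL)$ and $c_1(\sL)^2$ (up to the contribution of the bad fibres). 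Second, Xiao's inequality applied to $h$: by the Corollary of Theorem~\ref{thm1.4}, valid in any characteristic, $K_{S_1/B}^2\ge\tfrac{4(g_i-1)}{g_i}\chi_h$ when $g_i\ge2$ (and $K_{S_1/B}^2\ge0$, treated directly, when $g_i\le1$). Third, positivity: $K_{S_1/B}$ is relatively nef, $\Delta$ is effective, and $c_1(\sL)$ is positive on the general fibre, which one leverages — after arranging the branch divisor on a good model — to control the cross terms $K_{S_1/B}\!\cdot\! c_1(\sL)$ and $c_1(\sL)^2$. Feeding these into the Riemann--Roch identity and eliminating yields the claimed inequality.

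The main obstacle is the birational bookkeeping in the first step. The cover is only regular over the generic fibre, so resolving $S\dashrightarrow S_1$ and choosing a smooth relative model of $S_1$ forces modifications of $S$, and one must make sure these do not spoil $K_{S/B}^2$ and $\deg f_*\omega_{S/B}$: one has to track exceptional curves, and, more seriously, the singular fibres of $h$, where $\Delta$ may acquire vertical components, $\sL$ or $M$ may fail to be relatively nef, and $R^1h_*\sO_{S_1}(M)$ may be nonzero. This is precisely the part that needs the double-cover/minimal-model dictionary (even divisors, canonical resolutions of double covers) rather than formal manipulation, and in characteristic $p>0$ one must replace the relative vanishing used for $R^1h_*\sO_{S_1}(M)$ by a direct argument on the fibres. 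Once $K_{S/B}^2\ge\tfrac{4(g-1)}{g-g_i}\deg f_*\omega_{S/B}$ is established, the consequence needed for Theorem~\ref{thm3} is immediate: a degree-two map onto a rational curve would make $F$ hyperelliptic, so the standing non-hyperelliptic hypothesis forces $g_i\ge1$, whence $\tfrac{4(g-1)}{g-g_i}\ge\tfrac{4(g-1)}{g-1}=4$.
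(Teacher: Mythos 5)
The paper does not actually prove this proposition: it is quoted verbatim from Cornalba--Stoppino \cite{C.S}, so the only thing to compare your sketch with is the argument of that cited paper, and indeed your plan (spread the fibrewise degree-two map to a relative double cover $\theta\colon S\to S_1$, use $\theta_*\sO_S=\sO_{S_1}\oplus\sL^{-1}$, $\omega_{S/B}=\theta^*(\omega_{S_1/B}\otimes\sL)$, then combine relative Riemann--Roch, Xiao's bound for $h\colon S_1\to B$ and positivity of the branch data) is the same strategy. However, as a proof it has a concrete gap beyond the ``birational bookkeeping'' you flag: the hypothesis $g_i<\frac{g-1}{4}$ never enters your argument (except implicitly in making $M|_{C_i}$ nonspecial, which only needs $g_i\le\frac{g-1}{2}$), yet the stated bound $\frac{4(g-1)}{g-g_i}$ is simply false for double-cover fibrations without a threshold of this kind; in \cite{C.S} it is precisely the assumption $g\ge 4g_i+1$ that makes this the operative sharp bound, and it must be used in the final elimination step. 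Until you locate where that inequality is consumed, the outline cannot produce the claimed constant.

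A second genuine problem is the characteristic. The entire double-cover dictionary you invoke --- the trace splitting $\theta_*\sO_S=\sO_{S_1}\oplus\sL^{-1}$, $\sL^{\otimes 2}=\sO_{S_1}(\Delta)$, and the canonical bundle formula --- requires $2$ to be invertible and the degree-two map to be separable; in characteristic $2$ the cover is Artin--Schreier or may even be inseparable, so this is not a matter of replacing a vanishing theorem for $R^1h_*\sO_{S_1}(M)$ by ``a direct argument on the fibres'': the basic structure of the argument breaks. Since \cite{C.S} is a characteristic-zero result while the surrounding Theorem \ref{thm3} is asserted in all characteristics (the positive-characteristic analogue for non-hyperelliptic semi-stable fibrations being the subject of \cite{L.S}), any honest proof along your lines must either restrict the characteristic or supply a genuinely new argument in characteristic $2$ (and also justify that the fibrewise map $\psi_i$, which in this paper is only produced from sections $s_1,\dots,s_{r_i}$ on a general closed fibre after a Frobenius twist, really spreads out over the generic point of $B$).
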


\begin{proof} [Proof of Theorem 3] When $n=1$ (i.e. $f_*\omega_{S/B}$ strongly semistable), Theorem 3 is true by Proposition 1.
When $n>1$, Theorem 3 is a consequence of Proposition 2, Proposition 3 and Proposition 4
since we have $g_i\geq 1$ if $c_i=2$.
\end{proof}

\noindent\address{Hao Sun: Department of Mathematics, Shanghai Normal University, Shanghai 200234, P. R. of China.}\\
Email: hsun@shnu.edu.cn\\
\noindent\address{Xiaotao Sun: Institute of Mathematics and University of Chinese Academy of Sciences, P. R. of China.}\\
Email: xsun@math.ac.cn \\
\address{Mingshuo Zhou: School of Science, Hangzhou Dianzi University, Hangzhou 310018, P. R. of China.
\\ Institute of Mathematics, Academy of Mathematics and Systems Science, Chinese Academy of Sciences, Beijing 100190, P. R. of China.}\\
Email: zhoumingshuo@amss.ac.cn

\begin{thebibliography}{00}
\bibitem{ACGJ} Arbarello, E., Cornalba, M., Griffiths, P. A. and Harris, J.: {\em  Geometry of algebraic curves, Vol. I}, volume 267 of Grundlehren der Mathematischen Wissenschaften [Foundmental Principles of Mathematical Sciences]. Springer-Verlag, New York, (1985).

\bibitem{C.H} Cornalba, M. and Harris, J.: {\em Divisor classes associated to families of stable
varieties with application to the moduli space of curves}, Ann. Sci. Ec. Norm. Sup. 21, 455-475, (1988).

\bibitem{C.S} Cornalba, M. and Stoppino, L.: {\em A sharp bound for the slope of double cover fibrations}, Michigan Math. J. 56, no, 3, 551-561, (2008).

\bibitem{K} Konno, K.: {\em Nonhyperelliptic fibrations of small genus and certain irregular canonical surface}, Annali della Scuola Normale Superiore
di Pisa 20, no. 4, 575-595, (1993).

\bibitem{LA} Langer, A.: {\em Semistable sheaves in positive characteristic}, Ann. of Math. 159, 251-276, (2004).

\bibitem{L.S} Lu, X. and Sun, H.: {\em Slopes of non-hyperelliptic fibrations in positive characteristic}, International Mathematics Research Notices,
DOI: 10.1093/imrn/rnn999.

\bibitem{L.Z} Lu, X. and Zuo, K.: {\em On the gonality and the slope of a fibred surface}, DOI: 10.13140/RG.2.1.1420.4321.

\bibitem{MY} Miyaoka, Y.: {\em The chern classes and Kodaira dimension of a minimal variety}, Advanced Studies in Pure Mathematics 10, (1987)
Algebraic Geometry, Sendai, 449-476, (1985).

\bibitem{M} Moriwaki, A.: {\em Bogomolov conjecture over function fields for stable curves with only irreducible fibers}, Compos Math. 105, 125-140, (1997).

\bibitem{S} Sun, X.: {\em Frobenius morphism and semistable bundles},
Advanced Studies in Pure Mathematics {60} (2010), Algebraic
Geometry in East Asia-Seoul, 161-182, (2008).

\bibitem{X4} Xiao, G.: {\em Surfaces fibr$\acute{e}$es en courbes de genre deux}, Lect. Notes Math. 1137, Springer 1985.

\bibitem{XG} Xiao, G.: {\em Fibred algebraic surfaces with low slope}, Math. Ann. 276, 449-466, (1987).

\bibitem{Y.Z} Yuan, X, and Zhang, T.: {\em Relative Noether inequality on fibered surfaces},
Adv. Math. 259, 89-115, (2014).



\par\bigskip
\end{thebibliography}
\end{document}